\newtheorem*{acknowledgements*}{Acknowledgements}
\newtheorem{prob}{Open Problem}
\newtheorem{theorem}{Theorem}[section]
\newtheorem{lemma}[theorem]{Lemma}
\newtheorem{corollary}[theorem]{Corollary}
\newtheorem{proposition}[theorem]{Proposition}
\newtheorem*{definition}{Definition}
\newtheorem*{question}{Question}
\newtheorem*{theorem*}{Theorem}
\theoremstyle{remark}
\newtheorem{remark}{Remark}
\newtheorem{case}{Case}
\numberwithin{equation}{section}
\newtheoremstyle{named}{}{}{\itshape}{}{\bfseries}{.}{.5em}{\thmname{#1 }\thmnote{#3}}
\theoremstyle{named}
\newtheorem*{namedthm}{Theorem}
\newcommand{\R}{\mathbb{R}}
\newcommand{\Z}{\mathbb{Z}}
\newcommand{\N}{\mathbb{N}}
\newcommand{\boldX}{\boldsymbol{X}}
\newcommand{\boldx}{\boldsymbol{x}}
\newcommand{\boldy}{\boldsymbol{y}}
\newcommand{\boldk}{\boldsymbol{k}}
\newcommand{\boldn}{\boldsymbol{n}}
\newcommand{\boldj}{\boldsymbol{j}}
\newcommand{\boldt}{\boldsymbol{t}}
\newcommand{\boldu}{\boldsymbol{u}}
\newcommand{\boldz}{\boldsymbol{z}}
\newcommand{\boldh}{\boldsymbol{h}}
\newcommand{\boldg}{\boldsymbol{g}}
\newcommand{\bcalX}{\boldsymbol{\mathcal{X}}}
\newcommand{\boldalpha}{\boldsymbol{\alpha}}
\newcommand{\boldmu}{\boldsymbol{\mu}}
\newcommand{\boldgama}{\boldsymbol{\gamma}}
\begin{document}
	
	\title[PPC for higher dimensional real sequences]{Poissonian Pair Correlation for Higher Dimensional Real Sequences}
	
	\author{Tanmoy Bera, Mithun Kumar Das, Anirban Mukhopadhyay}
	\address{$^{1,3}$The Institute of Mathematical Sciences, A CI of 
		Homi Bhabha National Institute, CIT Campus, Taramani, Chennai
		600113, India.}
	\address{$^{2}$ National Institute of Science Education and Research, A CI of 
		Homi Bhabha National Institute, Jatni, Khurda, 
		752050, India\\ and \\ Mathematics Section, The Abdus Salam International Centre for Theoretical Physics, Str. Costiera, 11, 34151 Trieste, Italy}
	\email[Tanmoy Bera$^1$]{tanmoyb@imsc.res.in}
	\email[Mithun Kumar Das$^2$]{das.mithun3@gmail.com} 
	\email[Anirban Mukhopadhyay$^3$]{anirban@imsc.res.in}

	\subjclass[2010]{11K06, 11J83, 11M06, 11J25, 11J71, 42B05, 11L07}
	\keywords{ Pair correlation, Riemann zeta function, Diophantine
		inequality, exponential sum }
	\maketitle
	
	\begin{abstract}
		In this article, we examine the Poissonian pair correlation (PPC) statistic for higher-dimensional real sequences. Specifically, we demonstrate that for $d\geq 3$, almost all $(\alpha_1,\ldots,\alpha_d) \in \mathbb{R}^d$, the sequence $\big(\{x_n\alpha_1\},\dots,\{x_n\alpha_d\}\big)$ in $[0,1)^d$ has PPC conditionally on the additive energy bound of $(x_n).$ This bound is more relaxed compared to the additive energy bound for one dimension as discussed in \cite{CA2021Real}. More generally, we derive the PPC for $\big(\{x_n^{(1)}\alpha_1\},\dots,\{x_n^{(d)}\alpha_d\}\big) \in [0,1)^d$ for almost all $(\alpha_1,\ldots,\alpha_d) \in \mathbb{R}^d.$ As a consequence we establish the metric PPC for $(n^{\theta_1},\ldots,n^{\theta_d})$ provided that all of the $\theta_i$'s are greater than two.
	\end{abstract}
	
	\section{Introduction}
	A sequence of real numbers $(a_k)$ in $[0,1)$ is called uniformly distributed if for any subinterval $[a, b$) in $[0,1),$
	\[\lim_{n\rightarrow \infty}\frac{\# \{k\leq n: a_k\in [a, b)\}}{n}=b-a.
	\]
	A classic example of such a sequence is the Kronecker sequence, defined as $a_k=\{k\alpha\}$ (where $\{\cdot\}$ denotes the fractional part). This sequence exhibits uniform distribution if and only if $\alpha$ is irrational. The concept of uniform distribution of a sequence signifies that the relative frequency of elements falling into any given subinterval asymptotically approaches the length of that subinterval. 
	
	When we establish that a sequence follows uniform distribution, it naturally prompts us to explore its statistical characteristics on more refined scales. One such statistical property of interest is the pair correlation. For $s>0$, the pair correlation function is defined as
	\[R_{2}(s, a_n, N):=\frac{1}{N}\#\Big\{1\leq n\neq m\leq N: \|a_n-a_m\|\leq\frac{s}{N}\Big\},\]
	where $\displaystyle\|y\|=\min_{k\in\Z}|y+k|$. We say that $(a_n)$ in $[0,1]$ exhibit Poissonian pair correlation if for all $s>0$
	\begin{align*}
		\lim_{N\rightarrow \infty}R_{2}(s, a_n, N)=2s.
	\end{align*}
	This topic has gained significant prominence over the past three decades, following the pioneering work of Rudnick and Sarnak~\cite{rudnick1998pair}. While substantial progress has been made in developing theory for one-dimensional pair correlation, our understanding remains limited when it comes to higher-dimensional sequences.  At the same time the theory of uniform distribution is well studied in higher dimensions (see the book~\cite{DT}).
	
	Recently, the concept of pair correlation in higher-dimensional sequences has been explored in \cite{hinrichs2019multi,steinerberger2020poissonian,BDM}. This exploration involved constraining the test sets to have their centers at the origin with respect to both the sup-norm and the $2$-norm. A natural question is whether the pair correlation property in higher-dimensional sequences is dependent on the choice of norms.
	
	For higher-dimensional real sequences, the pair correlation function can be defined as 
	\begin{definition}
		Let $d\geq 1$ be a natural number. Let $s>0$ be a real number and $N$ be a natural number, $(\boldy_n)\subseteq[0,1)^d$ be a sequence and $\|\cdot\|$ be a norm in $\R^d$. The $d$-dimensional pair correlation function $R_{2,\|\cdot\|}^{(d)}(s,(\boldy_n),N)$ is defined as follows:
		\[R_{2,\|\cdot\|}^{(d)}(s,(\boldy_n),N):=\frac{1}{N}\#\Big\{1\leq n\neq m\leq N: \|\boldy_n-\boldy_m\|^{\text{(intdist)}}\leq\frac{s}{N^{1/d}}\Big\},\]
		where $\displaystyle\|\boldy\|^{\text{(intdist)}}=\min_{\boldk\in\Z^d}\|\boldsymbol{y}+\boldk\|.$\\
		\noindent
		We say that $(\boldy_n)$ has Poissonian pair correlation with respect to $\|\cdot\|$-norm (or, $\|\cdot\|$-PPC) if for all $s>0$,
		\begin{align}\label{defn}
			R_{2,\|\cdot\|}^{(d)}(s,(\boldy_n),N)\to \omega_{(d,\|\cdot\|)}s^d \quad \mbox{ as } N\rightarrow \infty,
		\end{align}
		where $\omega_{(d,\|\cdot\|)}$ is the volume of the unit ball in $(\R^d,\|\cdot\|).$
	\end{definition}
	
	In this article we study the pair correlation function with respect to the sup-norm. We say $(\boldy_n)$ has $\infty$-PPC if~\eqref{defn} holds for sup norm. In this case the pair correlation function is denoted by  $R_{2,\infty}^{(d)}(s,(\boldy_n),N).$
	
	\begin{definition}[Metric Poissonian pair correlation (MPPC)]
		We say that a sequence $(\boldx_n)$ in $(\R_{>0})^d$  possesses $\infty$-MPPC if the sequence $(\{\boldx_n\boldalpha\})=(\{x_n^{(1)}\alpha_1\}, \ldots, \{x_n^{(d)}\alpha_d\})$ exhibits $\infty$-PPC for almost all $\boldalpha \in \mathbb{R}^d$.
	\end{definition}
	
	For one dimensional real sequence, the additive energy for the set of first $N$ elements is defined as 
	\[E(X_N):= \#\{\boldn\in[1,N]^4: |x_{n_1}+x_{n_2}-x_{n_3}-x_{n_4}|<1\}.\]
	It is worth noting that for integer sequences, the condition `$|x_{n_1}+x_{n_2}-x_{n_3}-x_{n_4}|<1$' in $E(X_N)$ reduces to `$x_{n_1}+x_{n_2}=x_{n_3}+x_{n_4}$'.
	For one-dimensional integer sequences, the study of metric pair correlation has been explored in \cite{CA2017additive, Bloom2019GCDSA} conditionally on additive energy bound.
	In the case of one-dimensional real sequences, a recent and highly influential set of results has been established by Aistleitner, El-Baz, and Munsch in \cite{CA2021Real}. Their contributions are as follows:
	\begin{namedthm}[A]
		Let $(x_n)$ be a sequence of positive real numbers for which there exists a constant $c>0$ such that $x_{n+1}-x_n\geq c,$ for all  $n\geq 1$. Assume that there exists some $\delta>0$ such that,
		\[E(X_N)\ll N^{\frac{183}{76}-\delta}.\] 
		Then $(x_n)$ has MPPC.
	\end{namedthm}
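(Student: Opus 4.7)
The approach I would take is the standard variance (second-moment) method combined with a Borel--Cantelli argument. First, I would write the pair correlation count in Fourier-analytic form: sandwiching the indicator of $(-s/N, s/N) \pmod 1$ between trigonometric polynomials $f^{\pm}$ of degree $\asymp H$ (for instance via the Beurling--Selberg majorants), one obtains
$$R_{2}(s, \{x_n\alpha\}, N) - 2s \;\approx\; \frac{1}{N}\sum_{0<|h|\leq H} \hat f(h)\, \bigl|S_N(h\alpha)\bigr|^2 \;+\; O\!\left(\frac{1}{H}\right),$$
where $S_N(\beta) = \sum_{n\leq N} e(x_n\beta)$ with $e(t) = e^{2\pi i t}$, and the Fourier coefficients satisfy $\hat f(h) \ll s/N$. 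The $h=0$ term produces the expected main term $2s$, and the separation hypothesis $x_{n+1}-x_n\geq c$ guarantees that $S_N$ is well-behaved (in particular $\int|S_N|^2 d\alpha \ll N$).

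Next, I would control the variance $V(N) := \int_{0}^{1}(R_{2}(s,\{x_n\alpha\},N) - 2s)^2 \, d\alpha$. Squaring the Fourier expansion and integrating in $\alpha$ collapses the resulting sum via Plancherel into a count of quadruples $(n_1,n_2,n_3,n_4)$ satisfying $h_1 x_{n_1} + h_2 x_{n_2} = h_3 x_{n_3} + h_4 x_{n_4}$ (up to an error governed by the spacing $c$). After dyadic decomposition in $h$ and Hölder's inequality, one reduces this to a combination of the additive energy $E(X_N)$ and an $L^{6}$-moment bound for $S_N(\beta)$, the latter itself derived from $E(X_N)$ by Cauchy--Schwarz. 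Schematically this yields
$$V(N) \;\ll_{s}\; \frac{(\log N)^{A}}{N^{2}}\bigl(E(X_N) \cdot H^{o(1)}\bigr) \;+\; \frac{1}{H^{2}},$$
and optimizing $H$ under the hypothesis $E(X_N) \ll N^{183/76 - \delta}$ produces $V(N) \ll N^{-\eta}$ for some $\eta = \eta(\delta) > 0$.

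With $V(N) \ll N^{-\eta}$ in hand, Chebyshev's inequality ensures that the $\alpha$-measure of $\{\alpha : |R_2(s,\{x_n\alpha\},N) - 2s| > \varepsilon\}$ is $\ll_{\varepsilon} N^{-\eta}$. Applying Borel--Cantelli along a polynomial subsequence $N_k = \lfloor k^{A}\rfloor$ with $A > 1/\eta$, one obtains $R_{2}(s,\{x_n\alpha\},N_k) \to 2s$ for almost every $\alpha$, for each fixed $s$ in a countable dense set. To extend to all $N$, I would compare $R_2(s,\alpha,N)$ for $N \in [N_k, N_{k+1}]$ with $R_{2}(s^{\pm},\alpha,N_k)$ for slightly perturbed values of $s^{\pm}$; since $N_{k+1}/N_k = 1 + O(1/k)$, the resulting discrepancy is absorbed, and monotonicity of $R_2$ in $s$ together with a density argument over $s$ yields MPPC.

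The main obstacle is the variance estimate in the second step, and in particular the sharpness of the exponent $183/76$. This number arises from a delicate interpolation between the trivial $L^{\infty}$ bound on the Fourier coefficients $\hat f(h)$ (using the spacing assumption) and the $L^{4}$/$L^{6}$-moment estimates for $S_N$, which are themselves controlled by the assumed bound on $E(X_N)$; the particular value $183/76$ reflects the optimal choice of truncation and Hölder exponents in this scheme, and any improvement would require correspondingly better moment estimates for $S_N$ of Bourgain type. A secondary, more technical hurdle lies in the subsequence-to-full-sequence interpolation, where one must exploit both the polynomial growth of $N_k$ and the lower gap bound $x_{n+1}-x_n\geq c$ in order not to lose the gain $N^{-\eta}$ when passing between consecutive $N_k$.
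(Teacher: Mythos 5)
Your proposal captures the general framework (Beurling--Selberg majorants/minorants for the indicator, a second-moment estimate, Chebyshev plus Borel--Cantelli along a polynomial subsequence, and interpolation to all $N$) but it misses the technical heart of the argument and would therefore not deliver the exponent $183/76$.

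First, since $(x_n)$ is a real sequence, the relevant metric statement concerns almost every $\alpha\in\mathbb{R}$, not $\alpha\in[0,1]$. The variance must be computed against a measure $d\mu(\alpha)=\frac{2\sin^2(\alpha/2)}{\pi\alpha^2}d\alpha$ on $\mathbb{R}$ (whose Fourier transform is compactly supported on $(-1,1)$), so that the off-diagonal terms collapse to inequalities of the form $\|j\,|x_m-x_n|-t\,|x_k-x_l|\|<1$ rather than to exact equations $h_1x_{n_1}+h_2x_{n_2}=h_3x_{n_3}+h_4x_{n_4}$, which only make sense for integer $x_n$. Second, and more seriously, the schematic bound you propose,
\begin{align*}
V(N) \ll \frac{(\log N)^{A}}{N^{2}}\bigl(E(X_N)\cdot H^{o(1)}\bigr)+H^{-2},
\end{align*}
cannot produce the threshold $E(X_N)\ll N^{183/76-\delta}$: since $183/76>2$, this bound would already fail to tend to $0$. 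The actual mechanism is different. After a dyadic decomposition and a clever encoding of the difference set $\{|x_m-x_n|\}$ into smoothed coefficients $a_h$ and a Dirichlet-type generating function $P(y)$, the lattice-point count is reexpressed as an integral against a Gál-type (GCD-type) sum $\sum_{j,t\geq1}\widehat{K}(\log jt)(jt)^{-1/2}(j/t)^{iy}$. A contour/residue lemma (Lemma 6 in the Aistleitner--El-Baz--Munsch paper, reproduced here as Lemma~\ref{bridge lemma}) converts this sum into an integral of $\zeta(1/2+iy+iu)\zeta(1/2-iy+iu)$ against a Fej\'er-type kernel plus boundary terms, and the whole estimate is then closed by H\"older's inequality together with Ivi\'c's moment bound
\begin{align*}
\int_{0}^{T}|\zeta(1/2+it)|^{178/13}\,dt\ll T^{29/13+\epsilon}.
\end{align*}
The exponents $A=178/13$ and $\theta=29/13$ are precisely where $183/76$ comes from, via $1/A+1/A+1/B=1$ giving $B=89/76$. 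Your attribution of $183/76$ to $L^4$/$L^6$ moments of the exponential sum $S_N$ is therefore incorrect: those moments only encode $E(X_N)$ itself and could at best give a threshold of order $N^{2}$; the gain above $N^2$ is entirely an artifact of the Riemann zeta function entering through the GCD-sum reduction. Without this bridge, the proposed argument does not prove the stated theorem.
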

	Furthermore, they consider the quantity $E_\gamma(X_N)$ for $\gamma\in (0,1]$, given by
	\begin{align}\label{variant real additive energy}
		E_\gamma(X_N)=\#\{\boldn\in[1,N]^4: |x_{n_1}+x_{n_2}-x_{n_3}-x_{n_4}|<\gamma\}.
	\end{align}
	Conditioning on this quantity, they have established the following result:
	\begin{namedthm}[B]\label{Theorem B}
		Let $(x_n)$ be a sequence of positive real numbers for which there exists a constant $c>0$ such that $x_{n+1}-x_n\geq c,$ $n\geq 1.$
		Assume that there exists some $\delta>0$ such that for all $\eta>0$ we have,
		\[E_\gamma(X_N)\ll_{\eta,\delta} N^{2+\eta}+\gamma N^{3-\delta}\]
		as $N\to\infty,$ uniformly for $\gamma\in(0,1].$ Then the sequence $(x_n)$ has MPPC.
	\end{namedthm}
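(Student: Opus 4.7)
The approach is the standard second-moment (variance) method of metric pair correlation theory, calibrated to the real-valued setting of \cite{CA2021Real}. Fix $s > 0$, restrict $\alpha$ to a bounded interval $I \subset \R$ of positive measure, and set $F_N(\alpha) := R_{2,\infty}^{(1)}(s, (\{x_n\alpha\}), N)$. Because the target window $[-s/N, s/N]$ shrinks to measure zero, I would first replace its indicator by smooth majorants and minorants $\chi_\pm$ supported on slightly enlarged/shrunk windows, prove the Poissonian limit for the smoothed counting functions, and recover the original statement by sandwiching. A direct computation using the gap hypothesis $x_{n+1} - x_n \geq c$ yields $\mathbb{E}_{\alpha \in I}[F_N] \to 2s$, so the task reduces to controlling the variance.

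Expanding $\chi_\pm$ in a Fourier series on $[0,1)$ and squaring, $\mathrm{Var}_{\alpha \in I}[F_N]$ becomes a weighted sum of oscillatory integrals:
\[\frac{1}{N^2} \sum_{k_1, k_2 \neq 0} \hat\chi(k_1) \overline{\hat\chi(k_2)} \sum_{\substack{n_1 \neq m_1 \\ n_2 \neq m_2}} \int_I e\bigl(\alpha [k_1(x_{n_1} - x_{m_1}) - k_2(x_{n_2} - x_{m_2})]\bigr) \, d\alpha.\]
The inner integral is $O(\min\{|I|,\, 1/|k_1(x_{n_1} - x_{m_1}) - k_2(x_{n_2} - x_{m_2})|\})$. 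Decomposing $k_1,k_2$ dyadically with $|k_1|, |k_2| \sim K \lesssim N/s$, using $|\hat\chi(k)| \ll \min\{s/N,\, 1/|k|\}$, and applying Cauchy--Schwarz to reduce to the diagonal case $k_1 = k_2 = k$, the contribution of each dyadic block is bounded by a count of quadruples $(n_1, m_1, n_2, m_2)$ with $|x_{n_1} - x_{m_1} - x_{n_2} + x_{m_2}| < \gamma$ for $\gamma \asymp 1/(K|I|)$, that is, by the approximate additive energy $E_\gamma(X_N)$. Inserting the hypothesis $E_\gamma(X_N) \ll N^{2+\eta} + \gamma N^{3-\delta}$, summing dyadically over $K$, and optimising $\eta$, one obtains $\mathrm{Var}_{\alpha \in I}[F_N] \ll N^{-\delta'}$ for some $\delta' = \delta'(\delta) > 0$.

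With the variance bound in hand, the passage to almost-sure convergence is routine: choose a polynomial subsequence $N_j = \lfloor j^\tau \rfloor$ with $\tau > 1/\delta'$, apply Chebyshev together with Borel--Cantelli to obtain $F_{N_j}(\alpha) \to 2s$ almost everywhere on $I$, and then sandwich $F_N$ for $N \in [N_j, N_{j+1}]$ between the smoothed counts at scales $s/N_j$ and $s/N_{j+1}$, both of which tend to $2s$. Since $I$ was arbitrary, this produces convergence almost everywhere on $\R$.

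The main obstacle is the simultaneous balancing of parameters in the variance estimate: the Fourier scale $K$, the interval length $|I|$, and the additive-energy scale $\gamma$ must be coordinated so that neither of the two terms $N^{2+\eta}$ and $\gamma N^{3-\delta}$ of the hypothesis dominates once the weights $|\hat\chi(k)|^2$ are summed. The trivial diagonal contribution $k_1(x_{n_1} - x_{m_1}) = k_2(x_{n_2} - x_{m_2})$ must be split off and handled separately using the gap condition, since the additive-energy hypothesis provides no information about exact equalities. A further real-sequence-specific subtlety, absent in the integer case of \cite{CA2017additive}, is that $F_N$ is neither monotone in $N$ nor integer-valued, so the interpolation from $F_{N_j}$ to $F_N$ must proceed through the smooth envelopes $\chi_\pm$ rather than through a direct inclusion of events.
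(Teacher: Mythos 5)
Theorem B is not proved in this paper---it is quoted verbatim from Aistleitner, El-Baz and Munsch \cite{CA2021Real}---so there is no internal proof to compare against. What the paper does prove is the higher-dimensional analogue (Theorem~\ref{thm1}), and its proof reveals exactly why your argument, as written, has a genuine gap.

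Your high-level frame (majorant/minorant functions, the Fej\'er-type measure $d\mu$ with compactly supported $\widehat\mu$, mean computation, variance bound, Chebyshev plus Borel--Cantelli) is the right skeleton. The gap is in the step you describe as ``applying Cauchy--Schwarz to reduce to the diagonal case $k_1=k_2=k$,'' and it is not a minor technicality but the entire content of the theorem. After expanding the variance one must bound
\[
\frac{1}{N^{4}}\sum_{\substack{k_1,k_2\le rN}}|a_{k_1}||a_{k_2}|\,\#\{(j_1,j_2)\,:\, |k_1 z_{j_1}-k_2 z_{j_2}|<1\},
\]
where the $z_j$ run over the $\asymp N^2$ absolute differences $|x_n-x_m|$. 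Cauchy--Schwarz in $k$ (or any variant of it) does \emph{not} collapse this to the diagonal $k_1=k_2$: if you Cauchy--Schwarz inside the $\mu$-integral you land on $\sum_k|a_k|\int|\sum_j e(kz_j\alpha)|^2\,d\mu$, which does give an $E_{1/k}$ count, but after summing over $k\lesssim N$ and accounting for the $|a_k|$ weights, the $\gamma N^{3-\delta}$ term of the hypothesis contributes $\asymp N^{3-\delta}$ to the unnormalised sum, and the resulting variance bound is $\asymp N^{1-\delta}$, which \emph{diverges}. The trivial diagonalisation is lossy by a full factor of $N$.

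The genuine off-diagonal count $\#\{(j_1,j_2,k_1,k_2):|k_1 z_{j_1}-k_2 z_{j_2}|<1\}$ with independent $k_1,k_2$ is what has to be controlled, and in the real (non-integer) setting there is no GCD structure to exploit. The actual proof in \cite{CA2021Real}---and the generalisation in this paper, see Lemmas~\ref{lemma z to a}, \ref{lemma discrete to smooth} and especially \ref{lemma From K to additive} together with the bridge Lemma~\ref{bridge lemma}---replaces the discrete count by a ratio condition $|(1+1/T)^{h-g}-t/j|\lesssim 1/T$, encodes the dyadic-interval occupancy numbers in a Dirichlet polynomial $P(y)$, and then rewrites the bilinear sum over $(j,t)$ as an integral of $\zeta(\tfrac12+iy+iu)\,\overline{\zeta(\tfrac12-iy+iu)}$ against a Fej\'er kernel. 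The saving needed to beat the extra $N$ comes precisely from Ivi\'c's moment bound $\int_0^T|\zeta(1/2+it)|^{178/13}\,dt\ll T^{29/13+\epsilon}$, which is where the characteristic exponents $76/89$ and (in Theorem~A) $183/76$ originate. None of this appears in your proposal, and without it---or a comparable mechanism for the off-diagonal---the variance estimate does not close. The parenthetical $\gamma\asymp 1/(K|I|)$ also suggests you are using a naive indicator on an interval $I$ rather than a kernel with compactly supported Fourier transform; with the former the condition $|k_1 z_{j_1}-k_2 z_{j_2}|<1/|I|$ does not arise cleanly, and $\widehat{\mu}$ would not be compactly supported, which is essential to truncating the count at all.
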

	Clearly, this enables us to deduce metric pair correlations in certain cases where the condition on additive energy in Theorem A does not hold. As an application of Theorem B, in conjunction with a result by Robert and Sargos ( see Theorem~\ref{RS theorem}), it has been demonstrated that the sequence $(n^\theta)$ with $\theta>1$ exhibits the MPPC property (see \cite[Theorem 3]{CA2021Real}). For $(n^\theta)$ with $0<\theta<1$, the MPPC property has been established in \cite{RT}.
	
	For higher dimensional integer sequences, Hinrichs, Kaltenb{\"o}ck, Larcher, Stockinger and Ullrich~\cite{hinrichs2019multi} considered the $d$-dimensional sequence $(\{x_n\boldalpha\})$ with $(x_n) \subset\mathbb{N}$. They showed that if for any $\epsilon>0$
	\begin{align*}
		E(X_N)\ll \frac{N^3}{(\log{N})^{1+\epsilon}},
	\end{align*}
	then $(\{x_n\boldalpha\})$ has $\infty$-PPC for almost all $\boldalpha\in\R^d.$
	
	In a recent publication~\cite{BDM}, the authors established the $\infty$-MPPC property for any $d$-dimensional integer sequence $\big(x_n^{(1)},\dots,x_n^{(d)}\big)$ with strictly increasing components. This result is conditional upon the joint additive energy bound $E(\boldX_N^D) \ll N^{3-\delta}$ for any $\delta>0$, where $D:=\{1,\ldots,d\}$ and the joint additive energy $E(\boldX_N^D)$ is defined as:
	\begin{align*}
		E(\boldX_N^D):=\#\Big\{\boldn\in[1,N]^4: x_{n_1}^{(l)}+x_{n_2}^{(l)}=x_{n_3}^{(l)}+x_{n_4}^{(l)},\: 1\leq l\leq d\Big\}.
	\end{align*}
	In order to obtain our results for higher dimensional real sequences, it is necessary to define the analogous version of the joint additive energy for real sequences.  
	\begin{definition}(Joint additive energy for real sequences)
		Let $\boldX_N=\{(x_n^{(1)},\dots,x_n^{(d)})\in \mathbb{R}^d : n\leq N\}$ and  $\boldsymbol{\gamma}\in (0,1]^d$ be real numbers. For $D'\subseteq D:= \{1,\ldots,d\}$ we define the quantity  $E_{\boldsymbol{\gamma}}(\boldX_N^{D'})$  by
		\begin{align}\label{varient of joint additive energy}
			E_{\boldsymbol{\gamma}}(\boldX_N^{D'})=\#\{\boldn\in[1,N]^4: |x_{n_1}^{(l)}+x_{n_2}^{(l)}-x_{n_3}^{(l)}-x_{n_4}^{(l)}|<\gamma_l,\: l\in D'\}.
		\end{align}
		The joint additive energy for higher-dimensional real sequences, denoted as $E(\boldX_N^D)$ (with notation consistent with the integer case), is defined by $E(\boldX_N^D) := E_{\boldsymbol{1}}(\boldX_N^D)$.
	\end{definition}
	Before we state our general result, we state a special case.
		\begin{theorem}\label{thmoc}
			Let $d\geq3$ and $(x_n)$ be a sequence of positive real numbers satisfying the spacing condition $x_{n+1}-x_n\geq c,$ for some $c>0.$ Assume that there exists some  $\lambda>0$ such that
			\[E(X_N)\ll N^{\frac{183}{76}+\frac{16}{76}\left(1-\frac{1}{d}\right)-\lambda}.\]
			as $N\to\infty.$ Then $(\{x_n\boldalpha\})$ has $\infty$-PPC for almost all $\boldalpha\in\R^d.$
		\end{theorem}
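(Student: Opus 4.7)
The strategy adapts the variance method of Aistleitner--El-Baz--Munsch (Theorem~A) to the $d$-dimensional sup-norm setting. The sup-norm ball of radius $s/N^{1/d}$ on $(\R/\Z)^d$ is a Cartesian product of identical 1-D intervals, so its indicator admits a tensor Fourier expansion; this product structure is the source of the extra saving $\tfrac{16}{76}(1-\tfrac{1}{d})$ over the 1-D admissible exponent $\tfrac{183}{76}$.

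By Chebyshev's inequality along a subsequence $N_\ell$, the Borel--Cantelli lemma, and interpolation from $N_\ell$ to all $N$ (valid thanks to the spacing $x_{n+1}-x_n\geq c$), it suffices to establish the variance bound
\[
V_N := \int_{[0,1]^d} \bigl|R_{2,\infty}^{(d)}(s,\{x_n\boldalpha\},N)-2^d s^d\bigr|^2\,d\boldalpha \ll N^{-\eta}
\]
for some $\eta>0$. I approximate the indicator of the sup-norm ball from above and below by products $f^{\pm}(\boldy)=\prod_{i=1}^d g^{\pm}(y_i)$, where $g^{\pm}$ are trigonometric majorants/minorants of the 1-D indicator of degree $K\asymp N^{1/d+\epsilon}$ with $\hat g^{\pm}(k)\ll \min(N^{-1/d},|k|^{-1})$. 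Since $\boldy_n-\boldy_m\equiv(x_n-x_m)\boldalpha\pmod{1}$ componentwise, Fourier expansion together with $\sum_{n,m}e(\boldk\cdot(\boldy_n-\boldy_m))=|T(\boldk\cdot\boldalpha)|^2$ yields
\[
R_{2,\infty}^{(d)}(\boldalpha)-2^d s^d \;\approx\; \frac{1}{N}\sum_{\boldk\in\Z^d\setminus\{0\}}\hat f^{\pm}(\boldk)\,|T(\boldk\cdot\boldalpha)|^2,
\]
where $T(\beta):=\sum_{n\leq N}e(x_n\beta)$ and $\hat f^{\pm}(\boldk)=\prod_i \hat g^{\pm}(k_i)$.

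Squaring and integrating in $\boldalpha$ via the factorisation $\int_{[0,1]^d}e(\sum_i\gamma_i\alpha_i)\,d\boldalpha=\prod_i\int_0^1 e(\gamma_i\alpha_i)\,d\alpha_i \ll \prod_i\min(1,|\gamma_i|^{-1})$ gives
\[
V_N \ll \frac{1}{N^2}\sum_{\boldk,\boldk'\neq 0}\bigl|\hat f^{\pm}(\boldk)\hat f^{\pm}(\boldk')\bigr|\sum_{\boldn\in[1,N]^4}\prod_{i=1}^d\min\!\bigl(1,|\gamma_i|^{-1}\bigr),
\]
with $\gamma_i=(x_{n_1}-x_{n_2})k_i-(x_{n_3}-x_{n_4})k'_i$. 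The key point is that the product over $i$ forces all $d$ near-resonances $|\gamma_i|\lesssim 1$ to hold simultaneously for the same quadruple, which is much more restrictive than a single-coordinate condition. A dyadic decomposition in $\|\boldk\|_\infty,\|\boldk'\|_\infty$ and in the common scale $\gamma=\max_i|\gamma_i|$, combined with the $\gamma$-scale additive energy $E_\gamma(X_N)$ and the Robert--Sargos-type estimates that underpin the 1-D exponent $183/76$, yields an additional saving of $N^{-(1-1/d)\tau}$ for a suitable $\tau>0$, which is exactly what allows the weakened hypothesis $E(X_N)\ll N^{\frac{183}{76}+\frac{16}{76}(1-1/d)-\lambda}$ to suffice.

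\emph{Main obstacle.} The principal difficulty is quantitatively extracting the saving in the last step: the quadruple $(n_1,n_2,n_3,n_4)$ is shared across all $d$ coordinates, so one cannot apply the 1-D result $d$ times independently. Instead, the Fourier decay of $\hat f^{\pm}$, the dyadic level-sets of the $\gamma_i$, and the Robert--Sargos bounds on $E_\gamma(X_N)$ must be interleaved. Correctly optimising the transition between Fourier modes above and below $N^{1/d}$, and between the large- and small-$\gamma$ regimes of $E_\gamma(X_N)$, is precisely what pins down the exact exponent $\tfrac{16}{76}(1-\tfrac{1}{d})$ of the dimension-dependent improvement.
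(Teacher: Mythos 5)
Your high-level plan---tensor-product majorant/minorant trigonometric polynomials, the variance method, then Chebyshev and Borel--Cantelli---does match the paper's framework. But the mechanism you propose for extracting the dimensional saving is not the one that works, and you have flagged it yourself as the unresolved ``main obstacle.'' The paper's saving comes from a rigidity argument on the Fourier modes, not from a dyadic slicing in the near-resonance scale together with $E_\gamma(X_N)$. Concretely, after reducing (via the measure $d\boldmu$ on $\R^d$) to counting solutions of $\|\boldj z_m - \boldt z_n\|_\infty<1$ with $z_m,z_n\in\{|x_a-x_b|\}$ and $2^{u-1}\le j_l,t_l<2^u\ll N^{1/d}$, one splits on the size of $\min(z_m,z_n)$. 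In the regime $\min(z_m,z_n)\ge N^{1/d+\epsilon}$, cross-multiplying the inequalities $|j_l z_m-t_l z_n|<1$ and $|j_\nu z_m-t_\nu z_n|<1$ gives $|j_l t_\nu - j_\nu t_l|<(t_l+t_\nu)/z_m\ll N^{1/d}/N^{1/d+\epsilon}=o(1)$; since the left side is a nonnegative integer it must vanish, i.e.\ $j_1/t_1=\cdots=j_d/t_d$. This collapses the count of admissible pairs $(\boldj,\boldt)$ in $d-1$ of the coordinates from $(2^u)^{2(d-1)}$ to $(2^u)^{d-1}\ll N^{1-1/d}$, and feeding this gain (together with the shrunken Fourier cutoff $K\asymp N^{1/d}$) into the Aistleitner--El-Baz--Munsch zeta-moment machinery produces exactly the exponent $\tfrac{183}{76}+\tfrac{16}{76}\bigl(1-\tfrac1d\bigr)$. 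Your route via $E_\gamma(X_N)$ cannot reproduce this: the additive energy already governs the count over quadruples $\boldn$, while the dimensional gain lives entirely in the count over the pair of mode vectors $(\boldk,\boldk')$, and without the rigidity observation that count is simply too large.

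A secondary issue is the choice of measure. For real $(x_n)$ the paper does not integrate over $[0,1]^d$ with Lebesgue measure; it uses $d\boldmu(\boldx)=\prod_i\frac{2\sin^2(x_i/2)}{\pi x_i^2}\,dx_i$ on $\R^d$, whose Fourier transform is compactly supported in $(-1,1)^d$, so that the off-diagonal contribution becomes an exact indicator $\mathds{1}_{\|\boldj z_m-\boldt z_n\|_\infty<1}$ rather than the slowly decaying kernel $\prod_i\min(1,|\gamma_i|^{-1})$. This is both what makes the counting clean and what delivers the ``almost all $\boldalpha\in\R^d$'' conclusion (a $\boldmu$-a.e.\ statement implies a Lebesgue-a.e.\ statement on all of $\R^d$), which a Lebesgue argument on $[0,1]^d$ does not directly give for real $x_n$.
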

		
		We observe that when $d=1$, the exponent of the additive energy bound is at most $\frac{183}{76}\approx 2.4079$. This coincides with the exponent found in the result by Aistleitner et al.~\cite{CA2021Real} (see Theorem A). However, as soon as we consider $d\geq 3$, we can relax the additive energy bound by an additional quantity of $\frac{4}{19}\left(1-\frac{1}{d}\right)$ in the exponent, and as $d$ increases, the bound can go up to a maximum of $\frac{199}{76}\approx 2.6184$.

		Regarding the case when $d=2$, we have not been able to prove this stronger version, but we establish a weaker result in Theorem \ref{thm1}.
		\begin{theorem}\label{thm1}
			Let $(\boldx_n)$ be a sequence in $\R^d$ with positive components and $d\geq 2$. Also assume that there exist some $c>0$ such that $x_{n+1}^{(l)}-x_n^{(l)}\geq c$ for all $1\leq l\leq d$ and $n\geq 1$. If for all $D'\subseteq D=\{1,\ldots,d\}$ of $d'$ elements satisfy the bound
			\begin{align}\label{thm1 conditions}
				E(\boldX_N^{D'})\ll N^{4-\frac{1}{d}\left(2d'-\frac{31}{76}\right)-\lambda_{D'}},
			\end{align}
			for some $\lambda_{D'}>0$, then $(\boldx_n)$ has $\infty$-MPPC.
		\end{theorem}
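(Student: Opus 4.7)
The plan is to extend the Fourier $L^2$ framework of Aistleitner--El-Baz--Munsch \cite{CA2021Real} to the higher-dimensional real setting, using the joint additive energies $E_{\boldgama}(\boldX_N^{D'})$ in place of the one-dimensional $E_\gamma(X_N)$. Since $\infty$-PPC is a local property of $\boldalpha$, it suffices to establish almost-everywhere convergence of $R_{2,\infty}^{(d)}(s,(\{\boldx_n\boldalpha\}),N)$ for $\boldalpha$ in a fixed bounded box such as $[1,2]^d$. I would begin by sandwiching the indicator of the box $B_N := [-s/N^{1/d}, s/N^{1/d}]^d$ on the torus between Selberg-type trigonometric majorants and minorants $f_N^{\pm}$ whose Fourier spectrum is supported on $\|\boldk\|_\infty \leq M$ for a cutoff $M=M(N)$ to be optimised later. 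The $\boldk=\mathbf 0$ term produces the desired main term $(2s)^d$, and the task reduces to controlling the variance of the contribution of all nonzero frequencies.

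The heart of the argument is the computation of $\int_{[1,2]^d}|E_N(\boldalpha)|^2\,d\boldalpha$, where $E_N(\boldalpha)$ is the difference between the pair correlation count and the main term. After expanding $f_N^{\pm}$ in Fourier series, this variance reduces to a weighted sum over pairs $(\boldk,\boldk')\ne(\mathbf 0,\mathbf 0)$ and quadruples $(n_1,m_1,n_2,m_2)\in[1,N]^4$ in which each $\alpha_l$-integral contributes a factor $O\bigl(\min(1, 1/|t_l|)\bigr)$ with $t_l = k_l(x_{n_1}^{(l)}-x_{m_1}^{(l)}) - k_l'(x_{n_2}^{(l)}-x_{m_2}^{(l)})$. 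I would partition the $(\boldk,\boldk')$-sum by support pattern $D' := \{l : k_l \ne 0\text{ or }k_l'\ne 0\}$: on the diagonal $\boldk=\boldk'$ the surviving condition matches exactly the defining inequality of $E_{\boldgama}(\boldX_N^{D'})$ in \eqref{varient of joint additive energy} with $\gamma_l \asymp 1/|k_l|$, while the off-diagonal pairs can be handled by a Cauchy--Schwarz-type splitting that reduces them to a similar shape.

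To exploit the hypothesis \eqref{thm1 conditions}, which only bounds $E(\boldX_N^{D'}) = E_{\mathbf 1}(\boldX_N^{D'})$, one first derives estimates on $E_{\boldgama}(\boldX_N^{D'})$ for $\boldgama \in (0,1]^{d'}$ by interpolating between \eqref{thm1 conditions} and the trivial bound $E_{\boldgama}(\boldX_N^{D'})\ll N^3\prod_{l\in D'}\gamma_l$ available from the spacing hypothesis, mirroring the deduction of Theorem A from Theorem B in \cite{CA2021Real}. Plugging these in, using the Fourier decay $|\hat f_N^{\pm}(\boldk)|\ll\prod_l\min(s/N^{1/d},1/|k_l|)$, and summing, the contribution of each support pattern $D'$ of size $d'$ is bounded by $O(N^{-\varepsilon})$ for some $\varepsilon = \varepsilon(\lambda_{D'},d) > 0$; the exponent $2d'-31/76$ in \eqref{thm1 conditions} is precisely what balances the $M^{d-d'}$ factor from the inactive coordinates against the one-dimensional $183/76$ loss per active coordinate. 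Chebyshev plus Borel--Cantelli along a sparse subsequence $N_j=\lfloor(1+\eta)^j\rfloor$, combined with the standard sandwich/monotonicity argument in $N$ and $s$ from \cite{CA2021Real}, then yields $\infty$-MPPC. The hardest step will be the off-diagonal contribution $\boldk\ne\boldk'$: there the near-equality in $t_l$ does not factor directly as a joint additive energy, and one must combine Cauchy--Schwarz with a careful dyadic decomposition by the magnitudes of $k_l,k_l'$ before the $E_{\boldgama}(\boldX_N^{D'})$ bounds can be applied coordinate-by-coordinate, all while preserving the savings encoded in the $\lambda_{D'}$.
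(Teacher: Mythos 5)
Your high-level skeleton — Selberg sandwich, variance computation, decomposition by the set of active coordinates $D'$, Chebyshev plus Borel--Cantelli along a sparse subsequence — does match the paper, but there is a genuine gap at the technical core, and the route you propose to fill it fails. The problem is your interpolation step: you claim a ``trivial bound'' $E_{\boldgama}(\boldX_N^{D'})\ll N^3\prod_{l\in D'}\gamma_l$ from the spacing hypothesis, and propose to interpolate between this and the hypothesis $E(\boldX_N^{D'})=E_{\mathbf 1}(\boldX_N^{D'})\ll N^{4-\frac{1}{d}(2d'-31/76)-\lambda_{D'}}$ to control $E_{\boldgama}$ for all $\boldgama\in(0,1]^{d'}$. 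This is not available: the well-spacing assumption only yields $E_{\boldgama}\ll N^3$ uniformly in $\boldgama$, since once $n_1,n_2,n_3$ are fixed, the condition $|x_{n_1}^{(l)}+x_{n_2}^{(l)}-x_{n_3}^{(l)}-x_{n_4}^{(l)}|<\gamma_l$ with $\gamma_l<c$ still leaves $O(1)$ (not $O(\gamma_l)$) choices of $n_4$. A bound decaying like $\prod_l\gamma_l$ would require an equidistribution input absent from the hypotheses. Moreover, the ``deduction of Theorem A from Theorem B'' you invoke as a template is not something the authors of \cite{CA2021Real} actually carry out; the two theorems of that paper have disjoint, not nested, hypotheses.

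The paper sidesteps small-$\boldgama$ additive energies entirely by a different mechanism, which is the missing idea in your proposal. After introducing a product measure $d\boldmu$ on all of $\R^d$ with compactly-supported Fourier transform and dyadically localizing to the count \eqref{mainineq}, the authors encode weighted level counts into coefficients $a_{\boldh}$ (equations \eqref{b}, \eqref{a}) and a function $P(\boldy)$ in \eqref{P}, and show in Lemma~\ref{lemma P and additive} that $\int|P|^2\Phi\,d\boldy\ll T\,E(\boldX_N^{D'})$, so that only $E_{\mathbf 1}$ ever appears. The crucial step is then Lemma~\ref{lemma discrete to smooth} together with the bridge Lemma~\ref{bridge lemma}, which converts the near-resonance condition $|(1+1/T_l)^{h_l-g_l}-t_l/j_l|\le 4/T_l$ into an integral of products of Riemann zeta values on the critical line; the Ivić moment bound \eqref{zeta moment} with exponent $178/13$ is precisely what produces the $31/76$ in the theorem (check: $76/89\cdot\bigl(4-\frac{1}{d}(2d'-\frac{31}{76})\bigr)+\frac{152d'-31}{89d}+\frac{52}{89}-4=0$). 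Your picture of ``a $183/76$ loss per active coordinate balanced against an $M^{d-d'}$ factor'' does not reproduce this exponent, and the one-dimensional argument in \cite{CA2021Real} is not the Cauchy--Schwarz/dyadic splitting you propose for the off-diagonal — it is already the zeta-moment machinery. Without Lemmas~\ref{lemma P and additive}--\ref{bridge lemma}, or a substitute that turns $E_{\mathbf 1}$-bounds into control of the off-diagonal resonance count, the variance cannot be estimated under the stated hypotheses.
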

		\begin{remark}
			Conditionally on the Lindelöf hypothesis, the upper bound of $E(X_N)$ in Theorem \ref{thmoc} can be relaxed to $E(X_N)\ll N^{3-\lambda}$ for some $\lambda > 0$. This assumption essentially coincides with the integer case. For Theorem \ref{thm1}, under the Lindelöf hypothesis, the upper bound of $E(\boldX_N^{D'})$ can be relaxed to $E(\boldX_N^{D'})\ll N^{4-\frac{1}{d}(2d'-1)-\lambda_{D'}}$ for all $D' \subseteq D$.
		\end{remark}
		Now we consider the higher-dimensional analogue of the sequence $(n^\theta).$ If the $\theta_i$'s are integers, this case has already been addressed in \cite{BDM}. Let $\theta_1, \dots, \theta_d>2$ be real numbers that are not integers. 
		As a consequence of Theorem~\ref{thm1} and the joint additive energy bound~\eqref{ntheta add eng} we get the following.
		\begin{corollary}\label{high ntheta cor}
			If all of $\theta_1, \dots, \theta_d$ are greater than 2, then the sequence $(n^{\theta_1}, \dots, n^{\theta_d})$ has $\infty$-MPPC.
		\end{corollary}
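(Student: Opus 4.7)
The plan is to apply Theorem~\ref{thm1} directly to the sequence $\boldx_n = (n^{\theta_1}, \ldots, n^{\theta_d})$; two hypotheses need verification, namely uniform coordinate-wise spacing and the joint additive energy estimate~\eqref{thm1 conditions} for every subset $D' \subseteq D$.

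The spacing condition is immediate: since each $\theta_l > 2$, the mean value theorem gives
\[
x_{n+1}^{(l)} - x_n^{(l)} = (n+1)^{\theta_l} - n^{\theta_l} = \theta_l\,\xi_n^{\theta_l-1} \;\geq\; \theta_l
\]
for some $\xi_n \in (n,n+1)$ and all $n \geq 1$, so $c = \min_{1 \leq l \leq d}\theta_l$ works uniformly in $n$ and $l$.

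For the joint additive energy, the natural approach is a monotonicity reduction to a single coordinate. For any nonempty $D' \subseteq D$ and any $l_0 \in D'$, the definition~\eqref{varient of joint additive energy} with $\boldgama = \boldsymbol{1}$ gives, by dropping all constraints except the one at coordinate $l_0$,
\[
E(\boldX_N^{D'}) \;\leq\; E(\boldX_N^{\{l_0\}}) \;=\; E(X_N^{(l_0)}).
\]
Applying the Robert--Sargos estimate (Theorem~\ref{RS theorem}) to the sequence $(n^{\theta_{l_0}})$ with $\theta_{l_0} > 2$ non-integer yields
\[
E(X_N^{(l_0)}) \;\ll\; N^{2+\eta} + N^{4-\theta_{l_0}+\eta} \;\ll\; N^{2+\eta}
\]
for any $\eta>0$, the second term being absorbed because $4-\theta_{l_0} < 2$. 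Since $d' \leq d$, the target exponent in~\eqref{thm1 conditions} satisfies
\[
4 - \frac{1}{d}\!\left(2d' - \frac{31}{76}\right) \;\geq\; 2 + \frac{31}{76d},
\]
so fixing $\eta$ and $\lambda_{D'}$ both smaller than, say, $\tfrac{31}{152d}$ secures~\eqref{thm1 conditions} for every $D' \subseteq D$. Theorem~\ref{thm1} then gives $\infty$-MPPC for $(n^{\theta_1},\ldots,n^{\theta_d})$.

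There is essentially no serious obstacle here: the assumption $\theta_l > 2$ is precisely what collapses the Robert--Sargos estimate to its near-trivial shape $N^{2+\eta}$, and the monotonicity transfers this strong single-coordinate bound to every joint subset $D'$, comfortably beating the exponent threshold demanded by Theorem~\ref{thm1}. The only work is the routine arithmetic comparison above, which is the purpose of the bound~\eqref{ntheta add eng} alluded to in the statement.
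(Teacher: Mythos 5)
Your proof is correct and follows essentially the same route as the paper: apply Theorem~\ref{thm1}, verify the spacing condition, reduce the joint additive energy to a single coordinate via $E(\boldX_N^{D'}) \leq E(\boldX_N^{\{l_0\}})$, and invoke Robert--Sargos so that $\theta_{l_0} > 2$ collapses the bound to $N^{2+\eta}$, which comfortably satisfies~\eqref{thm1 conditions}. The only step you elide is the dyadic/H\"older argument of~\eqref{add eng observation} needed to pass from the Robert--Sargos count over $[B+1,2B]$ to the full-range additive energy, but you implicitly invoke it via~\eqref{ntheta add eng}, so the argument is complete in substance.
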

		
		In a recent study, Hofer and Kaltenb\"ock~\cite{haltonnotPPC2021} raised the following question:
		
		\begin{question}
			In the context of Poissonian pair correlations in higher-dimensional settings, is it necessary for all the component sequences to exhibit Poissonian pair correlations?
		\end{question}
		
		In our work \cite{BDM}, we provide a negative answer to this question when all the components of a higher-dimensional sequence are integers. The following corollary further offers a negative response to this question when the components are real.
		\begin{corollary}\label{n,logn PPC}
			For $A\geq1$, the $2$-dimensional sequence $(n,n\log^A{ n})$ has $\infty$-MPPC.
		\end{corollary}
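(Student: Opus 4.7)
The plan is to deduce the corollary by applying Theorem~\ref{thm1} with $d=2$ to the pair $(x_n^{(1)},x_n^{(2)})=(n,n\log^A n)$. The required spacing hypothesis $x_{n+1}^{(l)}-x_n^{(l)}\geq c$ is immediate for the first coordinate and, for the second coordinate, follows from the fact that $f(x):=x\log^A x$ satisfies $f'(x)=\log^A x+A\log^{A-1}x\geq 1$ once $x$ is sufficiently large, so $f(n+1)-f(n)$ is bounded below by a positive constant (the finitely many small indices being absorbed by shrinking the constant).

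There are three joint additive energy bounds in~\eqref{thm1 conditions} to verify, one for each $D'\subseteq\{1,2\}$. For the two singletons I would use only elementary estimates. The standard bound $E(\boldX_N^{\{1\}})\leq N^3$ is immediate. For $D'=\{2\}$, I fix $(n_1,n_3,n_4)$ and observe that strict monotonicity of $f$ with $f'\gg 1$ forces the number of integers $n_2\in[1,N]$ with $|f(n_2)-C|<1$ to be $O(1)$ for every real $C$, yielding $E(\boldX_N^{\{2\}})\ll N^3$. Both singleton estimates are comfortably below the required exponent $3+\tfrac{31}{152}$.

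The substantive step is the joint bound $E(\boldX_N^{\{1,2\}})\ll N^{2+\frac{31}{152}-\lambda}$. The first-coordinate constraint forces $n_1+n_2=n_3+n_4$, so I would parametrize $n_1=n_3+h$, $n_2=n_4-h$ with $h\in\Z$ and rewrite the remaining second-coordinate constraint as
\[
\bigl|\,[f(n_3+h)-f(n_3)]-[f(n_4)-f(n_4-h)]\,\bigr|<1.
\]
Two applications of the mean value theorem, together with $f''(x)\asymp\log^{A-1}(x)/x$, show that the expression in bars is of order $|h(n_3-n_4+h)|\log^{A-1}(N)/N$ up to constants. Setting $m=n_3-n_4+h$, the surviving constraint is $|hm|\ll N\log^{1-A}N$; the diagonal contribution from $h=0$ or $m=0$ is $O(N^2)$, and the standard divisor estimate gives an off-diagonal count of $O(N^2\log^{2-A}N)$, which is $O(N^{2+\varepsilon})$ for every $\varepsilon>0$ when $A\geq 1$.

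The main obstacle is precisely this last computation: the $h$-linear terms in $f(n_3+h)-f(n_3)$ and $f(n_4)-f(n_4-h)$ nearly cancel, and one must extract the true order of the residual uniformly in $h$ and in the gap $n_3-n_4$ over the relevant range. Once this second-order Taylor analysis is in place, all three bounds in~\eqref{thm1 conditions} hold with $\lambda_{D'}>0$, and Theorem~\ref{thm1} delivers the $\infty$-MPPC property for $(n,n\log^A n)$.
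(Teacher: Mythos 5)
Your proposal is correct, but the key step differs from the paper's. The paper deduces the corollary from Theorem~\ref{thm1} together with Corollary~\ref{boundcor} and the remark~\eqref{additive of nlogn}, both of which rest on Proposition~\ref{lm1}. That proposition bounds $E_\gamma(\bcalX_N)$ for a general twice-differentiable $F$ with $|F''(n)|\asymp|F''(N)|$ on $n\asymp N$, by first invoking Watt's Lemma~\ref{energy to integral lemma} to convert the Diophantine count into a second-moment integral of an exponential sum, and then applying the Van der Corput second-derivative estimate (Lemma~\ref{vander corput thm}) to that exponential sum, with a final dyadic summation. You instead attack $E(\boldX_N^{\{1,2\}})$ head-on: the integrality of the first coordinate collapses the constraint to $n_1+n_2=n_3+n_4$, and after the substitution $n_1=n_3+h$, $n_2=n_4-h$ two applications of the mean value theorem show that the second-coordinate expression equals $hm\,f''(\eta)$ with $m=n_3-n_4+h$ and $\eta$ lying between $\min_i n_i$ and $\max_i n_i$; the hyperbola/divisor count of pairs $(h,m)$ with $|hm|\ll N\log^{1-A}N$ then gives $O(N^2\log^{2-A}N)$ off the diagonal. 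Both routes land at $N^{2+o(1)}$, comfortably inside the threshold $N^{2+\frac{31}{152}-\lambda}$ required by~\eqref{thm1 conditions} for $d=2$, $d'=2$, and your singleton bounds $E(\boldX_N^{\{1\}}),E(\boldX_N^{\{2\}})\ll N^3$ are likewise (and more than) sufficient. What each buys: the paper's route is modular and reusable (it yields Proposition~\ref{lm1} as a stand-alone tool for any such $F$, and is what also produces~\eqref{additive of nlogn}), while yours is more elementary, avoids exponential sums entirely, and in fact gives a slightly sharper logarithmic exponent for this particular $f$. The uniformity issue you flag is genuine but benign: either restrict to a dyadic block $n\asymp N$ first (as the paper does in Proposition~\ref{lm1} and~\eqref{add eng observation}), which pins $\eta\asymp N$ and hence $f''(\eta)\asymp\log^{A-1}N/N$, or note directly that $\eta=O(1)$ forces $\min_i n_i=O(1)$, a configuration set of size $O(N^2)$ that can be absorbed into the diagonal term.
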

		This corollary follows from Theorem~\ref{thm1}, Corollary~\ref{boundcor},
		and the additive energy bound of $(n\log^An)$ in \eqref{additive of nlogn}. Note that $(\{n\alpha\})$ for any $\alpha\in \mathbb{R}$ does not have PPC and the same for $(\{n(\log{n})^A\alpha\})$ is not known.
		
		\subsection*{Strategy}The proofs of Theorem~\ref{thmoc} and Theorem~\ref{thm1} rely on employing higher-dimensional Fourier analysis on the torus $\mathbb{T}=[0,1]^d$ to detect the indicator function corresponding to the pair correlation function $R_{2,\infty}^{(d)}(s,(\boldsymbol{y}_n),N)$. Subsequently, we exploit the property of the $\infty$-norm to derive majorant and minorant functions for the indicator function, which take the form of truncated trigonometric series. The key insight here is to demonstrate that the variance of these majorant and minorant functions with respect to a specific measure on $\mathbb{R}^d$ becomes negligible as $N$ grows sufficiently large.
		This idea essentially extends the approach found in \cite{CA2021Real}, which deals with the one-dimensional case.
		
		By taking advantage of the analysis with respect to the sup-norm for the sequence of the form $(\{x_n\alpha_1\},\ldots,\{x_n\alpha_d\})$, we simplify the counting problem from a system of $2d$ variables to a system of $d$ variables (refer to \eqref{same component} and \eqref{reduction}). As a result,  Theorem \ref{thmoc} establishes the MPPC conditionally on the additive energy of the  sequence $(x_n)$ with a more relaxed additive energy bound than the one presented in \cite{CA2021Real}.
		
		Once we establish Theorem~\ref{thm1}, the proof of Corollary~\ref{high ntheta cor} and Corollary~\ref{n,logn PPC} reduces to demonstrating that the joint additive energy bound of those sequences is at most $N^{2+\epsilon}.$ To achieve this, we make use of the work of Watt~\cite{N Watt} on counting solutions to a specific system of Diophantine inequalities, coupled with the Van der Corput estimate of the exponential sum. The remainder of the proof involves a generalization of the argument found in the proof of \cite[Theorem 3]{CA2021Real}. 
		
		We list some of the many interesting open problems related to our results in this paper.
		\begin{prob}
			Prove or disprove that the notion of pair correlation in higher dimension depends on the norm.
		\end{prob}
		
		\begin{prob}
			Show that Theorem \ref{thmoc} remains valid under the weaker additive energy bound assumption
			$E(X_N)\ll N^{3-\lambda }$ for some $\lambda >0$.
		\end{prob}
		\begin{prob}
			Let $0<\theta_1,\cdots,\theta_d<2$ be real numbers not all integers. Show that $(n^{\theta_1},\dots,n^{\theta_d})$ has $\infty$-MPPC.
		\end{prob}
		
		\section{Estimating joint additive energy for real sequences}\label{additive energy section}
		This section is devoted to obtain an upper bound of joint additive energy for the sequence $(n^{\theta_1},\dots,n^{\theta_d})$ and $(n,n\log^A n)$, where $A\geq1$. Such bounds are used to obtain 
		Corollary~\ref{high ntheta cor} and Corollary \ref{n,logn PPC}.
		We recall a result by Robert and Sargos \cite{Robert-Sargos} for the one-dimensional case:
		\begin{theorem}\cite[Theorem 2]{Robert-Sargos}\label{RS theorem}
			Let $\theta\neq 0,1$ be a fixed real number. For any $\gamma>0$
			and $B\geq 2$, let $\mathcal{N}(B,\gamma)$ denote the number of $4$-tuples $(n_1, n_2, n_3, n_4 )\in\{B + 1, B + 2,\dots, 2B\}^4$ for which
			\[|n_1^\theta+n_2^\theta-n_3^\theta-n_4^\theta|<\gamma.\]
			Then for every $\epsilon>0$
			\[\mathcal{N}(B,\gamma)\ll_\epsilon B^{2+\epsilon}+\gamma B^{4-\theta+\epsilon}.\]
		\end{theorem}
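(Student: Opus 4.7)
The plan is to fix two of the four indices and count the remaining ones as lattice points in a thin tube around a smooth level curve. Writing $I = \{B+1, \ldots, 2B\}$, and for each pair $(n_3, n_4) \in I^2$ setting $c := n_3^\theta + n_4^\theta$, we have
\[
\mathcal{N}(B, \gamma) = \sum_{(n_3, n_4) \in I^2} \#\bigl\{(n_1, n_2) \in I^2 : |n_1^\theta + n_2^\theta - c| < \gamma\bigr\},
\]
so it suffices to bound the inner count uniformly in $c$ and sum over the $B^2$ choices of $(n_3, n_4)$.

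For fixed $c$, the inner count is the number of lattice points of $I^2$ in a tubular neighborhood of the curve $\mathcal{C}_c : x^\theta + y^\theta = c$. On the box $I^2$ the gradient $\theta(x^{\theta-1}, y^{\theta-1})$ has size $\asymp B^{\theta-1}$, so the neighborhood has width $\asymp \gamma B^{1-\theta}$, while the arc of $\mathcal{C}_c$ inside the box has length $\asymp B$. Because $\theta \ne 0, 1$, implicit differentiation of $x^\theta + y^\theta = c$ gives $|y''(x)| \asymp B^{-1}$ uniformly on the arc, so the arc is strictly convex with curvature of order $\asymp B^{-1}$. I would then invoke a standard lattice-points-near-a-smooth-curve estimate of Huxley / Filaseta--Trifonov type, which for a $C^2$ arc of length $L$ with curvature $\asymp L^{-1}$ in a tube of width $\delta$ delivers a count $O_\epsilon\bigl((1 + \delta L) L^\epsilon\bigr)$. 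With $L \asymp B$ and $\delta \asymp \gamma B^{1-\theta}$ this gives an inner count of $O_\epsilon\bigl((1 + \gamma B^{2-\theta}) B^\epsilon\bigr)$, and summing over $(n_3, n_4) \in I^2$ produces $\mathcal{N}(B, \gamma) \ll_\epsilon B^{2+\epsilon} + \gamma B^{4-\theta+\epsilon}$, as claimed.

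The main obstacle is sharpening the lattice-point-near-curve input to the crisp form above: generic Huxley-type bounds also carry terms like $L^{2/3}$ and $(\delta L^2)^{1/2}$, and these can dominate for intermediate $\delta$. A clean resolution either (a) splits into two regimes according as $\gamma B^{2-\theta} \lessgtr 1$, handling the sub-threshold case by an elementary near-coincidence / divisor argument that absorbs the parasitic terms into $B^{2+\epsilon}$; or (b) bypasses the lattice geometry entirely by smoothing the cutoff $\mathbf{1}_{|\cdot| < \gamma}$ and estimating $\gamma \int_{|t| \ll 1/\gamma} |\sum_{n \in I} e(tn^\theta)|^4 \, dt$ via van der Corput's method on the exponential sum in $n^\theta$. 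In either approach the delicate point is uniformity in $\theta$, especially as $\theta$ approaches the degenerate values $0$ and $1$ at which the curvature of $\mathcal{C}_c$ collapses and the clean bound must fail.
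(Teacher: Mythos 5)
The paper does not prove this statement; it is quoted verbatim as Theorem~2 of Robert and Sargos~\cite{Robert-Sargos}, whose proof rests on their deep main theorem on three-dimensional exponential sums with monomials. So there is no ``paper's own proof'' to compare against, and what you have produced is an independent attempt at a genuinely difficult result.

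Your attempt has a real gap that you yourself flag, and neither of your proposed repairs closes it. The lattice-point route does give a tube of width $\delta \asymp \gamma B^{1-\theta}$ around a strictly convex arc of length $\asymp B$, but every unconditional near-curve counting theorem of Huxley, Swinnerton--Dyer, or Filaseta--Trifonov type carries a secondary term of size at least $B^{2/3}$ (and typically also $(\delta B^2)^{1/2}$ and similar), and no $\epsilon$-power trick removes it: summing $B^{2/3}$ over the $B^2$ choices of $(n_3,n_4)$ yields $B^{8/3}$, which overwhelms the target $B^{2+\epsilon}$ when $\gamma$ is small. Your fix (a), an ``elementary near-coincidence / divisor argument,'' is unavailable here because $\theta$ is an arbitrary non-integer real; there is no multiplicative structure in $n^\theta$ to exploit, so the sub-threshold count cannot be reduced to divisor sums. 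Your fix (b), passing to $\gamma \int_{|t|\ll 1/\gamma}\big|\sum_{n} e(tn^\theta)\big|^4\,dt$, is the correct general direction and is close in spirit to what Robert--Sargos actually do, but van der Corput's method alone does not supply sufficient cancellation in the relevant ranges of $t$; the whole point of their paper is that one needs the much stronger bound of their Theorem~1 for triple sums $\sum_h\sum_m\sum_n e(Xhmn/HMN)$ to make the fourth-moment estimate close. In short, both alternatives you offer sit at precisely the level of difficulty the theorem is designed to overcome, so the proposal should be regarded as an incomplete sketch rather than a proof, and the right move in this paper's context is to cite Robert--Sargos as a black box, exactly as the authors do.
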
	
		For $d$-dimensional sequence $(\boldx_n)$, and any fixed $\boldgama\in\R_{>0}^d$, similar to $\mathcal{N}(B,\gamma)$, we define
		\[\mathcal{N}(N,\boldgama):=\{\boldn\in[N,2N]^4\cap\N^4: |\boldx_{n_1}+\boldx_{n_2}-\boldx_{n_3}-\boldx_{n_4}|<\boldgama\}.\] 
		We consider a measure $d\boldmu_{\boldgama}$ on $\R^d$ by
		\begin{align*}
			d\boldmu_{\boldgama}(\boldx)=\prod_{1\leq i \leq d}\frac{\sin^2(\gamma_ix_i)}{\pi \gamma_ix_i^2}dx_i.
		\end{align*}
		Then, its Fourier transform is given by
		\begin{align}\label{hig dim fourier trf of mu}
			\widehat{\boldmu}_{\boldgama}(\boldt)=\int_{\R^d}e^{-i\boldt\cdot\boldx}d\boldmu(\boldx)=\prod_{i\leq d}\max\left(1-\frac{|t_i|}{2\gamma_i},0\right).
		\end{align}
		Let $L\geq 1$ be the smallest integer such that $N\leq 2^L.$ Then, by application of H\"older’s inequality and~\eqref{hig dim fourier trf of mu}, we have
		\begin{align}\label{add eng observation}
			E_{\boldsymbol{\gamma}}(\boldX_N^{D})&\ll\int_{\R^d}\left(\sum_{l=0}^{L}\sum_{2^{l-1}<n\leq 2^l}e\left(x_n^{(i)}\alpha_1+\cdots+x_n^{(d)}\alpha_d\right)\right)^4d\boldmu_{\boldsymbol{\gamma}}(\boldalpha)\nonumber\\
			&\ll \int_{\R^d}\left(\sum_{l=0}^{L}1\right)^3\sum_{l=0}^{L}\left|\sum_{2^{l-1}<n\leq 2^l}e\left(x_n^{(i)}\alpha_1+\cdots+x_n^{(d)}\alpha_d\right)\right|^4d\boldmu_{\boldsymbol{\gamma}}(\boldalpha)\nonumber\\
			&\ll(\log N)^3\sum_{l=0}^{L}\mathcal{N}(2^{l-1},2\boldsymbol{\gamma}).
		\end{align}
		To obtain an upper bound of the joint additive energy,
		it is enough to know the upper bound of $\mathcal{N}(N,\boldgama)$.	
		Since $E(\boldX_N^{D})\leq \min_{i\leq d}E(\boldX_N^{\{i\}}),$ from Theorem~\ref{RS theorem} and~\eqref{add eng observation}, we get an upper bound for the joint additive energy of $(n^{\theta_1}, \dots, n^{\theta_d})$ as
		\begin{align}\label{ntheta add eng}
			E(\boldX_N^{D})\ll N^{2+\epsilon}+N^{4-\max \theta_i+\epsilon},
		\end{align}
		for any $\epsilon>0.$
		
		Now we estimate the joint additive energy for $\boldX_N=\{(n,n\log^A n):n\leq N\}$.
		\begin{proposition}\label{lm1}
			Let $F$ be a twice continuously differentiable function such that $F''(n)\not\equiv 0$ and  $|F''(n)|\asymp |F''(N)|$ for $n\asymp N$. Let $\bcalX_N:=\{(n,F(n)):N\leq n\leq 2N\}$. For any $M>1$ we have 
			\begin{align*}
				E_\gamma(\bcalX_N)\ll\min\Big\{N^3, \Big(N^3/M+N^3|F''(N)|+N\log M/|F''(N)|\Big)\Big\}.
			\end{align*}
		\end{proposition}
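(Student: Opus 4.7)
The plan is to convert the count $E_\gamma(\bcalX_N)$ into a two-parameter problem (a difference $h$ in the first coordinate and a gap $\Delta$) and to exploit the hypothesis on $F''$ via a mean-value integral representation of the second-coordinate obstruction.

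Since $\gamma\in(0,1]$ and the $n_i$ are integers, the first-coordinate condition $|n_1+n_2-n_3-n_4|<\gamma$ forces the exact identity $n_1+n_2=n_3+n_4$. I would parameterise the surviving quadruples by writing $n_1=n_3+h$ and $n_4=n_2+h$ with $h\in\mathbb{Z}$, $|h|\le N$, and set $\Delta:=n_3-n_2$. A double application of the fundamental theorem of calculus rewrites the second-coordinate obstruction as
\[
\Phi:=F(n_1)+F(n_2)-F(n_3)-F(n_4)=\int_{n_2}^{n_3}\!\!\int_{x}^{x+h}F''(u)\,du\,dx.
\]
The hypotheses $|F''(n)|\asymp V:=|F''(N)|$ on $[N,2N]$ and $F''\not\equiv 0$, together with continuity of $F''$, force $F''$ to keep a constant sign on $[N,2N]$, so that $|\Phi|\asymp V|h||\Delta|$. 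Hence the $F$-constraint $|\Phi|<\gamma$ becomes the divisor-type inequality $|h||\Delta|\ll\gamma/V$.

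With this reduction in hand, I split by the size of $|h|$. For $|h|\le N/M$ I use the trivial bound (dropping the $F$-constraint), which contributes at most $\ll(N/M+1)\cdot N^2\ll N^3/M+N^2$ triples. For $N/M<|h|\le N$ the bound $|h||\Delta|\ll\gamma/V$ permits at most $O(\gamma/(V|h|)+1)$ admissible values of $\Delta$, and each $(h,\Delta)$ yields $O(N)$ admissible choices of $n_2$; summing gives
\[
\sum_{N/M<|h|\le N} N\!\left(\frac{\gamma}{V|h|}+1\right)\ll\frac{\gamma N\log M}{V}+N^2.
\]
Combining the two ranges and using $\gamma\le 1$ produces $E_\gamma(\bcalX_N)\ll N^3/M+N\log M/V+N^2$, and the AM--GM inequality $N^2\le N^{3/2}V^{1/2}\cdot N^{1/2}(\log M)^{1/2}/V^{1/2}\le \tfrac12(N^3V+N\log M/V)$ (valid once $M\ge e$; otherwise $N^2$ is absorbed directly into $N^3/M$) converts this into the displayed bound $N^3/M+N^3V+N\log M/V$. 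The complementary ceiling $E_\gamma(\bcalX_N)\le N^3$ is immediate, because $n_4$ is determined by $(n_1,n_2,n_3)$ via $n_1+n_2=n_3+n_4$.

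No substantial obstacle is anticipated: the one delicate point is the constant-sign property of $F''$ needed for the lower bound $|\Phi|\gtrsim V|h||\Delta|$, but this is automatic from continuity and the asymptotic $|F''(n)|\asymp V>0$ on $[N,2N]$. All remaining steps are elementary counting via the standard $\sum 1/|h|\ll\log M$ estimate.
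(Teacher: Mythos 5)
Your proof is correct, and it takes a genuinely different route from the paper's. The paper first rewrites the (integer-forced) condition $n_1+n_2=n_3+n_4=k$, then invokes Watt's Lemma~\ref{energy to integral lemma} to convert the diophantine count into an $L^2$-mean of the exponential sum $\sum_n e((F(n)+F(k-n))y/M)$, and bounds that sum by the van der Corput estimate of Lemma~\ref{vander corput thm}; the three terms $N^3/M$, $N^3|F''(N)|$, $N\log M/|F''(N)|$ arise respectively from the range $y\le 1$, and from the two terms of the van der Corput bound integrated over $1\le y\le M$. You instead parameterize the solution set directly by $(h,\Delta,n_2)$, identify $\Phi$ with the double integral $\int_{n_2}^{n_3}\int_{x}^{x+h}F''$, use the constant-sign/constant-size property of $F''$ to deduce the two-sided estimate $|\Phi|\asymp|F''(N)||h||\Delta|$, and then count lattice points $(h,\Delta)$ under the hyperbola $|h||\Delta|\ll\gamma/|F''(N)|$, splitting at $|h|=N/M$. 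Your method is elementary (no exponential-sum machinery, no reference to Watt), and in fact yields the sharper intermediate bound $N^3/M+\gamma N\log M/|F''(N)|+N^2$, with no $N^3|F''(N)|$ term; the AM--GM step only serves to massage this into the form stated in the proposition. The one point to make explicit is that the two-sided bound $|\Phi|\asymp|F''(N)||h||\Delta|$ needs $F''$ of constant sign across the whole relevant range, which follows from continuity once one observes $|F''|\asymp|F''(N)|>0$ forces $F''$ to be nowhere vanishing on $[N,2N]$; and one should check that the inner integration variable $u$ stays in $[N,2N]$, which it does since $n_2,n_3,n_2+h,n_3+h$ all lie in $[N,2N]$. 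A trade-off worth noting: the paper's exponential-sum route generalizes directly to the case where the first coordinate is also a non-integer sequence (cf.\ the Remark following the proposition, where the same machinery is used to bound $E(X_N)$ for $X_N=\{n\log^A n\}$ without the integer crutch $n_1+n_2=n_3+n_4$), whereas your argument relies essentially on that exact integer identity to reduce to the $(h,\Delta)$ parameterization.
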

		Choosing $F(x)=x(\log{x})^A$ and $M=N$ in Proposition \ref{lm1} we have $E_\gamma(\bcalX_N)\ll N^{2}(\log{N})^{A-1}+N^2(\log N)^{2-A}.$ Then, by~\eqref{add eng observation} we have the following:
		\begin{corollary}\label{boundcor}
			Let $A\geq1$ be a real number.
			\[E_\gamma(\boldX_N)\ll N^{2}(\log{N})^{A+2}+N^2(\log N)^{5-A}.\]
		\end{corollary}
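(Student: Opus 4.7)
The plan is to specialize Proposition~\ref{lm1} to the function $F(x) = x(\log x)^A$ with the parameter choice $M = N$, and then lift the resulting dyadic bound to the full sequence by summing through \eqref{add eng observation}.

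First I would verify the hypotheses of Proposition~\ref{lm1}. Direct differentiation gives
\[
F''(x) = \frac{A(\log x)^{A-1}}{x} + \frac{A(A-1)(\log x)^{A-2}}{x},
\]
which is nonzero for $x$ sufficiently large, with $|F''(x)| \asymp (\log x)^{A-1}/x$. In particular $|F''(n)| \asymp |F''(N)|$ whenever $n \asymp N$, so the hypotheses are met. Substituting $M = N$ and $|F''(N)| \asymp (\log N)^{A-1}/N$ into Proposition~\ref{lm1}, the three summands inside the $\min$ become $N^2$, $N^2(\log N)^{A-1}$, and $N^2(\log N)^{2-A}$. Since $A \geq 1$, the first is absorbed into the second, so
\[
E_\gamma(\bcalX_N) \ll N^2(\log N)^{A-1} + N^2(\log N)^{2-A}.
\]

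Next I would pass from the dyadic block $\bcalX_N$ to the full range $\boldX_N$ via \eqref{add eng observation}, which states
\[
E_\gamma(\boldX_N) \ll (\log N)^3 \sum_{l=0}^{L} \mathcal{N}(2^{l-1}, 2\boldgama),
\]
with $L \asymp \log N$. Each summand $\mathcal{N}(2^{l-1}, 2\boldgama)$ is an additive-energy count on a dyadic block of scale $2^{l-1}$, so applying the previous step at that scale (and using that the bound in Proposition~\ref{lm1} is uniform in $\gamma$) yields $\mathcal{N}(2^{l-1}, 2\boldgama) \ll 4^l\, l^{A-1} + 4^l\, l^{2-A}$ for $l$ large. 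For the finitely many small $l$ where the asymptotic form of $F''$ fails, the trivial bound $\mathcal{N}(2^{l-1}, 2\boldgama) \leq 2^{4l}$ contributes only $O(1)$. Because of the geometric factor $4^l$, the sum is dominated by its top term, giving
\[
\sum_{l=0}^{L} \mathcal{N}(2^{l-1}, 2\boldgama) \ll N^2(\log N)^{A-1} + N^2(\log N)^{2-A},
\]
and multiplying by $(\log N)^3$ produces the stated bound $E_\gamma(\boldX_N) \ll N^2(\log N)^{A+2} + N^2(\log N)^{5-A}$.

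There is no substantive obstacle; the argument is a direct specialization followed by a geometric dyadic resummation. The only mild technicality worth flagging is the degeneration of the $F''$ asymptotics at very small dyadic scales, which is absorbed into the implicit constant via the trivial bound.
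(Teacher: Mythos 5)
Your proof is correct and follows exactly the paper's route: specialize Proposition~\ref{lm1} with $F(x)=x(\log x)^A$ and $M=N$ to obtain the dyadic-block bound $E_\gamma(\bcalX_N)\ll N^2(\log N)^{A-1}+N^2(\log N)^{2-A}$, then sum over dyadic scales via~\eqref{add eng observation}, picking up the extra factor $(\log N)^3$. Your added details (verifying $|F''(x)|\asymp (\log x)^{A-1}/x$, the geometric resummation, and absorbing the small-$l$ blocks trivially) are sound and merely make explicit what the paper leaves implicit.
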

		In order to prove the Proposition~\ref{lm1} we need the following two lemmas. 
		
		\begin{lemma}[Lemma 2.1 \cite{N Watt}]\label{energy to integral lemma}
			Let $K$ be a positive integer. Let $\mathcal{A}\subset\R$ be bounded and $\boldsymbol{\omega}$ be an $\R^K$-valued function defined on $\mathcal{A}\cap\Z$. Let $V_{2M}(\mathcal{A},\boldsymbol{\omega},\boldsymbol{\delta})$ be the number of solutions of the system of Diophantine inequalities,
			\begin{align}\label{diophantine inequalities}
				\left|\sum_{m=1}^{M}(\omega_k(u_m)-\omega_k(v_m))\right|<\delta_k \text{  for }k=1,\dots, K,
			\end{align}
			where $u_m,v_m\in\mathcal{A}\cap\Z$ for $m=1,\dots,M.$
			Let $2\delta_kD_k=1,$ for $k=1,\dots,K.$ Then
			\begin{align}\label{energy to integral}
				\delta_1\cdots\delta_KW_{2M}(\mathcal{A},\boldsymbol{\omega},\boldsymbol{\delta})\ll_{K}V_{2M}(\mathcal{A},\boldsymbol{\omega},\boldsymbol{\delta})\ll_{K}\delta_1\cdots\delta_KW_{2M}(\mathcal{A},\boldsymbol{\omega},\boldsymbol{\delta}),
			\end{align}
			where
			\begin{align*}
				W_{2M}(\mathcal{A},\boldsymbol{\omega},\boldsymbol{\delta})=\int_{-D_1}^{D_1}\cdots\int_{-D_K}^{D_K}|T(\mathcal{A},\boldsymbol{\omega},\boldx)|^{2M}dx_1\dots dx_K
			\end{align*}
			and
			\begin{align*}
				T(\mathcal{A},\boldsymbol{\omega},\boldx)=\sum_{u\in\mathcal{A}\cap\Z}e(\boldsymbol{\omega}(u)\cdot \boldx).
			\end{align*}
		\end{lemma}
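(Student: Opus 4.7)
The plan is to prove the lemma by Fourier duality, using a single Fejér-type kernel to move between the counting function $V_{2M}$ and the integral $W_{2M}$ in both directions. The starting identity is
\begin{equation*}
|T(\mathcal{A},\boldsymbol{\omega},\boldx)|^{2M}=\sum_{\boldn\in(\mathcal{A}\cap\Z)^{2M}} e\bigl(\boldx\cdot\boldsymbol{S}(\boldn)\bigr),
\end{equation*}
where $\boldn=(u_1,\dots,u_M,v_1,\dots,v_M)$ and $S_k(\boldn)=\sum_{m=1}^M(\omega_k(u_m)-\omega_k(v_m))$, so that $V_{2M}$ counts the $\boldn$ with $|S_k(\boldn)|<\delta_k$ for every $k\leq K$. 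The only tool I need is the Fejér kernel $F_T(y)=\sin^2(\pi Ty)/(\pi^2 Ty^2)$, which is non-negative, has Fourier transform $\Lambda_T(\xi)=\max(1-|\xi|/T,0)$ supported on $[-T,T]$, and satisfies the elementary lower bound $F_T(y)\geq 4T/\pi^2$ for $|y|\leq 1/(2T)$ (via $|\sin\theta|\geq(2/\pi)|\theta|$ on $[-\pi/2,\pi/2]$).

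For the direction $V_{2M}\ll_K\delta_1\cdots\delta_K W_{2M}$, I would majorize $\mathbf{1}_{|y|<\delta_k}(y)\leq(\pi^2\delta_k/2)F_{D_k}(y)$ (which is valid because $2\delta_k D_k=1$), substitute the Fourier representation $F_{D_k}(y)=\int_{-D_k}^{D_k}\Lambda_{D_k}(\xi_k)\,e(\xi_k y)\,d\xi_k$, and swap the finite sum over $\boldn$ with the integral. Recognizing $\sum_\boldn e(\boldsymbol{\xi}\cdot\boldsymbol{S}(\boldn))=|T(\boldsymbol{\xi})|^{2M}$ collapses the sum into $(\pi^2/2)^K\delta_1\cdots\delta_K\int\prod_k\Lambda_{D_k}(\xi_k)|T(\boldsymbol{\xi})|^{2M}\,d\boldsymbol{\xi}$, which is at most $(\pi^2/2)^K\delta_1\cdots\delta_K W_{2M}$ since each $\Lambda_{D_k}\leq\mathbf{1}_{[-D_k,D_k]}$.

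For the reverse inequality $\delta_1\cdots\delta_K W_{2M}\ll_K V_{2M}$, I would interchange the roles of $D_k$ and $\delta_k$: the same Fejér bound applied with $T=\delta_k$ yields $\mathbf{1}_{[-D_k,D_k]}(\xi_k)\leq(\pi^2 D_k/2)F_{\delta_k}(\xi_k)$, which extends the integration in $W_{2M}$ to all of $\R^K$. Expanding $|T|^{2M}$ and integrating against the product of Fejér kernels produces $\prod_k(\pi^2 D_k/2)\sum_\boldn\prod_k\Lambda_{\delta_k}(S_k(\boldn))$, which is bounded pointwise by $(\pi^2/2)^K(D_1\cdots D_K)V_{2M}$; the identity $\prod_k D_k=2^{-K}\prod_k\delta_k^{-1}$ then collapses the constant to give the claim. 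The argument is essentially bookkeeping and Fourier conventions; the only step requiring real care is the elementary Fejér lower bound on $|y|\leq 1/(2T)$, which underlies both halves of the proof.
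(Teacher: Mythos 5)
Your Fejér-kernel argument is correct and complete. The paper itself does not reprove this lemma -- it is quoted directly as Lemma~2.1 of Watt~\cite{N Watt} -- so there is no in-paper proof to compare against; but your proposal is the standard and essentially canonical way to establish such a counting-versus-integral duality, and Watt's original argument is of the same flavour.

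Both directions check out. For $V_{2M}\ll_K\delta_1\cdots\delta_K W_{2M}$: with $2\delta_k D_k=1$, the bound $F_{D_k}(y)\geq 4D_k/\pi^2$ on $|y|\leq 1/(2D_k)=\delta_k$ gives $\mathbf{1}_{|y|<\delta_k}\leq(\pi^2\delta_k/2)F_{D_k}(y)$ for all $y$ (trivially for $|y|\geq\delta_k$ by positivity of $F_{D_k}$); inserting the Fourier representation of $F_{D_k}$, swapping the finite sum over $\boldn$ with the compactly supported integral, recognising $\sum_\boldn e(\boldsymbol{\xi}\cdot\boldsymbol{S}(\boldn))=|T(\boldsymbol{\xi})|^{2M}$, and dropping $\Lambda_{D_k}\leq 1$ on $[-D_k,D_k]$ gives $V_{2M}\leq(\pi^2/2)^K\delta_1\cdots\delta_K W_{2M}$. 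For the reverse inequality, $\mathbf{1}_{[-D_k,D_k]}(\xi_k)\leq(\pi^2 D_k/2)F_{\delta_k}(\xi_k)$ by the same Fejér lower bound with the roles of $\delta_k,D_k$ swapped; extending the $\boldsymbol{\xi}$-integral to $\R^K$, expanding $|T|^{2M}$, and evaluating $\int_\R F_{\delta_k}(\xi_k)e(\xi_k S_k(\boldn))d\xi_k=\Lambda_{\delta_k}(S_k(\boldn))$ (which is $\leq\mathbf{1}_{|S_k(\boldn)|<\delta_k}$) yields $W_{2M}\leq(\pi^2/2)^K D_1\cdots D_K V_{2M}$, whence $\delta_1\cdots\delta_K W_{2M}\leq(\pi^2/4)^K V_{2M}$ using $\delta_k D_k=1/2$. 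The only point I would urge you to state explicitly is the justification for interchanging the sum over $\boldn$ with the integral: in the first direction the integration domain is compact and the sum finite, and in the second direction $F_{\delta_k}\in L^1(\R)$ and the sum is again finite, so Fubini applies harmlessly in both cases.
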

		\begin{lemma}\cite[Theorem 2.2]{Graham Kolesnik}\label{vander corput thm}
			Suppose that $f$ is a real valued function with two continuous derivatives on an interval $I$. Suppose there exits some $\lambda >0$ and some $\alpha\geq1$ such that
			$$\lambda \leq|f''(x)|\leq\alpha\lambda$$ 
			on $I$. Then
			\[\sum_{n\in I}e(f(n))\ll\alpha|I|\lambda^{1/2}+\lambda^{-1/2}.\]
		\end{lemma}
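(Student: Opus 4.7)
The plan is to reduce the desired bound to the classical Kusmin--Landau inequality, which states that whenever $g$ is a real-valued function on $[a,b]$ with monotonic derivative satisfying $\|g'(x)\|\geq\delta$ throughout (where $\|\cdot\|$ denotes the distance to the nearest integer), one has $\sum_{a<n\leq b}e(g(n))\ll\delta^{-1}$. I would briefly recall this as a standard application of partial summation against the geometric-sum identity for $\sum e(\beta n)$; it is the main analytic input of the proof, even though it is not explicitly quoted in the excerpt.

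Since $|f''|\geq\lambda>0$ throughout $I$, the derivative $f'$ is strictly monotonic. First I would partition $I$ into the consecutive sub-intervals
\[
I_k:=\Big\{x\in I:f'(x)\in\Big[k-\tfrac12,\,k+\tfrac12\Big)\Big\},\qquad k\in\Z,
\]
indexed by those integers $k$ for which $I_k$ is nonempty. Since $|f''|\leq\alpha\lambda$, the total variation of $f'$ over $I$ is at most $\alpha\lambda|I|$, so the number of nonempty $I_k$ is at most $\alpha\lambda|I|+1$. The strategy is then to show that each such $I_k$ contributes only $O(\lambda^{-1/2})$ to the exponential sum, which multiplies out to the desired bound.

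On each nonempty $I_k$, set $g(x):=f(x)-kx$, so that $g'(x)=f'(x)-k$ is monotonic with $|g'(x)|=\|f'(x)\|$. I would further decompose $I_k=I_k^{\mathrm{near}}\cup I_k^{\mathrm{far}}$, where on the former $\|f'(x)\|<\lambda^{1/2}$ and on the latter $\|f'(x)\|\geq\lambda^{1/2}$. The hypothesis $|f''|\geq\lambda$ forces $|I_k^{\mathrm{near}}|\leq 2\lambda^{-1/2}$, so the sum over $I_k^{\mathrm{near}}$ is bounded trivially by $O(\lambda^{-1/2})$. On $I_k^{\mathrm{far}}$, splitting if necessary into the two sub-intervals on which $g'$ has constant sign, Kusmin--Landau applied to $g$ yields $\big|\sum_{n\in I_k^{\mathrm{far}}}e(f(n))\big|=\big|\sum e(g(n))\big|\ll\lambda^{-1/2}$. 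Summing the per-$k$ contribution $O(\lambda^{-1/2})$ over the at most $\alpha\lambda|I|+1$ values of $k$ produces $\alpha|I|\lambda^{1/2}+\lambda^{-1/2}$, as claimed.

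The main obstacle is the careful bookkeeping at the boundaries between the sub-intervals: one must verify that $g'$ has constant sign on each piece to which Kusmin--Landau is applied, and that the various decompositions of $I_k$ do not introduce extra logarithmic factors. This is arranged by exploiting the strict monotonicity of $f'$ (inherited from the non-vanishing of $f''$) and by using that $I_k^{\mathrm{far}}$ splits into at most two monotonic pieces. A secondary obstacle is producing a self-contained proof of Kusmin--Landau, but this is classical and amounts to applying Abel summation to $\sum_{n\leq N}e(g(n))$ using $|\sum_{n\leq N}e(\beta n)|\ll\|\beta\|^{-1}$ together with the fact that $g'$ is monotonic.
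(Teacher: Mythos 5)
The paper does not actually prove this lemma: it is quoted as Theorem~2.2 of Graham and Kolesnik's book, with no argument supplied. Your proof is a correct reconstruction of the standard argument, and it is in fact the same one used in the cited reference: Theorem~2.2 of Graham--Kolesnik is deduced from the Kusmin--Landau bound (their Theorem~2.1) by exactly the partition you describe, splitting $I$ into pieces $I_k$ on which $f'$ lies in $[k-\tfrac12,k+\tfrac12)$, bounding trivially where $\|f'(x)\|<\lambda^{1/2}$, and applying Kusmin--Landau to $g=f-kx$ on the (at most two) monotone pieces where $\|f'(x)\|\geq\lambda^{1/2}$. So there is no divergence from the source to report.

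One piece of bookkeeping is worth tightening. Each $I_k$ contributes $O(\lambda^{-1/2}+1)$ rather than $O(\lambda^{-1/2})$, because $I_k^{\mathrm{near}}$ has length $\leq 2\lambda^{-1/2}$ but may still contain an integer when $\lambda>1$; summing over the $\leq\alpha\lambda|I|+1$ nonempty $I_k$ then produces an extra $\alpha\lambda|I|$ term beyond $\alpha|I|\lambda^{1/2}+\lambda^{-1/2}$. This is harmless: for $\lambda\leq 1$ you have $\lambda^{-1/2}\geq 1$ and $\alpha\lambda|I|\leq\alpha\lambda^{1/2}|I|$, so the extra terms are absorbed, while for $\lambda>1$ the asserted bound already dominates the trivial estimate $|I|+1$ (in the only nondegenerate case $|I|\geq 1$). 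You should say this explicitly, but it does not affect the correctness of the approach.

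A second small point: in $I_k^{\mathrm{far}}$ you apply Kusmin--Landau to $g$ on each of the two sub-intervals where $g'$ has fixed sign; you should also note that on those sub-intervals $\|g'(x)\|=|g'(x)|$ because $g'\in(-\tfrac12,\tfrac12)$, so the hypothesis $\|g'\|\geq\lambda^{1/2}$ is genuinely met. This is implicit in your writeup and is exactly why the partition by $[k-\tfrac12,k+\tfrac12)$ is the right one.
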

		
		\begin{proof}[Proof of Proposition~\ref{lm1}]
			In order to estimate $E_\gamma(\bcalX_N)$, it is enough to estimate the count
			\begin{align*}
				\#\left\{\underline{n}\in[N,2N]^4:
				n_1+n_2=n_3+n_4,
				\left|F(n_1)+F(n_2)-F(n_3)-F(n_4)\right|<\gamma
				\right\}.
			\end{align*}
			If $n_1+n_2=n_3+n_4 =k$ then $k\in [2N, 4N]$, thus the count above is bounded by 
			\begin{align*}
				\#\left\{ k\in[2N,4N] \mbox{ and }\underline{n}\in[N,2N]^2 : \left|F(n_1)+F(k-n_1)-F(n_2)-F(k-n_2)\right|<\gamma \right\}.
			\end{align*}
			For $\delta>0$, $\mathcal{N}(N,M,\delta)$ to denote the counting 
			\begin{align*}
				\#\left\{k\in[2N,4N] \mbox{ and }\underline{n}\in[N,2N]^2 : \left|F(n_1)+F(k-n_1)-F(n_2)-F(k-n_2)\right|<\delta M\right\}.
			\end{align*}
			Then by Lemma~\ref{energy to integral lemma} it is not difficult to obtain
			\begin{align}\label{N}
				\mathcal{N}(N,M,\delta)\ll \delta\sum_{k\in[2N,4N]}\int_{0}^{1/2\delta}\bigg|\sum_{n\in[N,2N]}e\left(\left(F(n)+F(k-n)\right)y/M\right)\bigg|^2dy.
			\end{align}
			Let us call the integral $I(k,N,\delta)$ and the exponential sum $S(N,y)$. Observe that  whenever $k\in [2N, 4N]$ and $n\in [N, 2N]$, the size of the second order differentiation of the argument of the exponential function is of size $\asymp |F''(N)|y/M$.
			Thus by Lemma~\ref{vander corput thm},
			\begin{align}\label{s,F eqn}
				S(N,y)\ll N(|F''(N)|y/M)^{1/2}+(|F''(N)|y/M)^{-1/2}.
			\end{align}
			For $\delta=M^{-1}$ we write 
			\begin{align*}
				I(k,N,M^{-1}) \leq\int_{0}^{1}|S(N,y)|^2dy + \int_{1}^{M}|S(N,y)|^2dy.
			\end{align*}
			The first integral trivially bounded by $N^2$. For second integral, using~\eqref{s,F eqn} we get 
			\begin{align*}
				I(k,N,M^{-1})  \ll N^2 + N^2|F''(N)|M+M\log M/|F''(N)|.
			\end{align*}
			Replacing above estimate of $I(k,N,M^{-1})$ in \eqref{N} to deduce
			\begin{align*}
				\mathcal{N}(N,M,M^{-1})\ll N^3/M+N^3|F''(N)|+N\log M/|F''(N)|.
			\end{align*}
			Since $E_{\gamma}(\bcalX_N)\ll \mathcal{N}(k,N,M^{-1})$ and the trivial upper bound of $E_{\boldsymbol{\gamma}}(\bcalX_N)$ is $\ll N^3$, the result follows.
		\end{proof}
		\begin{remark}
			Using Lemma~\ref{energy to integral lemma} and Lemma~\ref{vander corput thm} it can be shown that for $A\geq 1$ the additive energy of $X_N=\{n\log ^{A}n: n\leq N\}$ satisfies 
			\begin{align}\label{additive of nlogn}
				E(X_N)\ll N^3(\log{N})^{2A+1}+N^3(\log N)^{5-A}.
			\end{align}
		\end{remark}
		We use the above estimate in corollary~\ref{n,logn PPC}.
		In the subsequent sections we prove Theorem~\ref{thmoc} and Theorem~\ref{thm1}. 
		
		\section{Preparation of the proofs of Theorem~\ref{thmoc} and Theorem~\ref{thm1}}
		For $\boldx$ in $\R^d$ we define the characteristic function $\chi_{s,N^{1/d}}$ by
		$$\chi_{s,N^{1/d}}(\boldx)= \begin{cases}
			1 & \mbox{ if }  \|\boldx\|_\infty^{\text{intdist}}\leq \frac{s}{N^{1/d}},\\
			0 & \mbox{ otherwise }.
		\end{cases}
		$$
		Thus, in order to prove Theorem~\ref{thm1} we have to show that for almost all $\boldalpha$ in $\R^d$
		\begin{align}\label{convergence}
			\frac{1}{N}\sum_{1\leq m\neq n\leq N}\chi_{s,N^{1/d}}((\boldx_n-\boldx_m)\boldalpha)
			\to (2s)^d \quad \mbox{ as } N\rightarrow \infty.
		\end{align}
		It is clear from the definition of $\chi_{s,N^{1/d}}$ that 
		\begin{align}\label{character}
			\chi_{s,N^{1/d}}(\boldx)=\chi^{1}_{s,N^{1/d}}(x^{(1)})\cdots\chi^{1}_{s,N^{1/d}}(x^{(d)}),
		\end{align}
		where, $\chi^{1}_{s,N^{1/d}}(x^{(i)})=1$ if  $\displaystyle\|x^{(i)}\|^{\text{(intdist)}}=\min_{k\in \mathbb{Z}}|x^{(i)}+k|\leq s/N^{1/d}$ and $0$ otherwise.
		It is well known that for any $s$ and $N$ there exist trigonometric polynomials $f_{K,s,N^{1/d}}^\pm(x)=\sum_{|j|\leq K}c_j^\pm e(jx)$, for a positive integer $K$, such that
		\begin{align}\label{majorant-minorant}
			f_{K,s,N^{1/d}}^-(x)\leq\chi^{1}_{s,N^{1/d}}(x)\leq f_{K,s,N^{1/d}}^+(x),
		\end{align}
		for all $x\in\R$. Further, we have
		\begin{align}\label{fouriercoeffequality}
			c_0^\pm=\int_{0}^{1}f_{K,s,N^{1/d}}^{\pm}(x) dx=\frac{2s}{N^{1/d}}\pm\frac{1}{K+1},
		\end{align}
		and any non-zero $j$-th Fourier coefficient $c_j^\pm$ of $f_{K,s,N^{1/d}}^\pm(x)$ satisfies the upper bound
		\begin{align}\label{fouriercoeffinequality}
			|c_j^\pm|\leq\min\left(\frac{2s}{N^{1/d}},\frac{1}{\pi|j|}\right)+\frac{1}{K+1}.
		\end{align}
		For more details one can see ~\cite[Chapter 1]{montgomery}. Thus, by imposing \eqref{majorant-minorant} in \eqref{character} we obtain
		\begin{align}\label{highinequality}
			F_{K,s,N^{1/d}}^-(\boldx)\leq\chi_{s,N^{1/d}}(\boldx)\leq F_{K,s,N^{1/d}}^+(\boldx),
		\end{align}
		where 
		\begin{align*}
			F_{K,s,N^{1/d}}^\pm(\boldx)&:=f_{K,s,N^{1/d}}^\pm(x^{(1)})\cdots f_{K,s,N^{1/d}}^\pm(x^{(d)})\\
			&=\displaystyle\sum_{\substack{\boldj\in\Z^d\\
					\|\boldj\|_\infty\leq K}}c_{\boldj}^\pm e(\boldj\cdot \boldx),
		\end{align*} 
		where 	$ c_{\boldj}:=c_{j_1}\cdots c_{j_d}$ and $c_{j_1},\dots,c_{j_d}$ satisfy ~\eqref{fouriercoeffequality} and ~\eqref{fouriercoeffinequality}.
		
		In order to establish \eqref{convergence} for $\chi_{s,N^{1/d}}$ it is enough to establish such result for its majorant and  minorant  $F_{K,s,N^{1/d}}^\pm$. Thus we want to show that for every fixed positive integer $r$ and for every $s>0,$
		\begin{align}\label{required}
			\frac{1}{N}\sum_{1\leq m\neq n\leq N}F_{(rN)^{1/d},s,N^{1/d}}^\pm((\boldx_n-\boldx_m)\boldalpha)\sim
			N\int_{[0,1]^d}F_{(rN)^{1/d},s,N^{1/d}}^\pm(\boldy)dy ~~~\mbox{ as } N \rightarrow \infty,
		\end{align}
		holds for almost all $\boldalpha$ in $\R^d.$ Then we can conclude \eqref{convergence} by using the fact that  $c_{\bold{0}}^{\pm}=(c_0^{\pm})^d$, \eqref{fouriercoeffequality} and tend $r$ to infinity.
		
		\subsection{Mean estimation}
		Now we prove ~\eqref{required} for $F_{K,s,N^{1/d}}^{+}$ using ``variance method". Similar arguments works for $F_{K,s,N^{1/d}}^{-}$.
		From now on $r$ and $s$ are fixed and we denote $F_{(rN)^{1/d},s,N^{1/d}}^+$ by $F_N$. 
		Let us consider the measure $$d\boldmu(\boldx)=\prod_{1\leq i \leq
			d}\frac{2\sin^2(x_i/2)}{\pi x_i^2}dx_i.$$
		Note that any $\boldmu$ almost sure result implies Lebesgue almost sure result and its Fourier transform is supported on $(-1,1)^d$ (see~\eqref{hig dim fourier trf of mu}). At first we want to compute the mean value of the quantity of interest as follows:
		\begin{align*}
			&\left|\int_{\R^d}\frac{1}{N}\sum_{1\leq m\neq n\leq N}F_N((\boldx_n-\boldx_m)\boldalpha)d\boldmu(\boldalpha)-N\int_{[0,1]^d}F_N(\boldy)dy\right|\\
			&\leq\left(N-\frac{N^2-N}{N}\right)\int_{[0,1]^d}F_N(\boldy)dy\\
			& +\frac{1}{N}\sum_{\substack{\boldj\in\Z^d\\
					1\leq\|\boldj\|_\infty\leq (rN)^{1/d}}}|c_{\boldj}| \left|\int_{\R^d}\sum_{1\leq m\neq n\leq N} e(\boldj\cdot (\boldx_n-\boldx_m)\boldalpha)d\boldmu(\boldalpha)\right|\\
			& \ll N^{-1}+N^{-2}\sum_{\substack{\boldj\in\Z^d\\
					1\leq\|\boldj\|_\infty\leq (rN)^{1/d}}}\sum_{1\leq m\neq n\leq N}\mathds{1}_{\|\boldj(\boldx_m-\boldx_n)\|_\infty<1}\ll N^{-1},
		\end{align*}
		since $|c_{\boldj}|\ll N^{-1}$ and we used the well-spacing of the sequence to get that the sum over $\boldj, m$ and $n$ is at most $N$.
		\subsection{Variance estimation}\label{variance subsection}
		Set
		\[H_N(\boldx):=F_N(\boldx)-\int_{[0,1]^d}F_N(\boldy)d\boldy=\sum_{\boldj\in\Z^d\setminus\{\boldsymbol{0}\}}c_{\boldj}e(\boldj\cdot \boldx),\]
		where $c_{\boldj} =0$ for $\|\boldj\|_\infty>(rN)^{1/d}$.
		We want to estimate the following quantity
		\[\text{Var}(H_N,\boldmu):=\int_{\R^d}\left(\frac{1}{N}\sum_{1\leq m\neq n\leq N}H_N((\boldx_n-\boldx_m)\boldalpha)\right)^2d\boldmu(\boldalpha).\]
		Let $D'\subseteq D$ with $|D'|=d'$ and $D'= \{ i_1,\ldots,i_{d'}  \}$ such that $i_1<\dots<i_{d'}$. Any element in such a $d'$-dimensional vector subspace of $\mathbb{R}^{d}$, which is isomorphic to $\mathbb{R}^{d'}$, is denoted by $\boldx^{D'}=(x^{(i_1)},\dots,x^{(i_{d'})})$. Thus
		\begin{align*}
			\text{Var}(H_N,\boldmu)	&=\frac{1}{N^2}\int_{\R^d} \left(\sum_{\substack{D'\subseteq D}}\sum_{1\leq m\neq n\leq N}\sideset{}{'}\sum_{\boldj\in\Z^{d'}}c_0^{d-d'}c_{\boldj }e(\boldj\cdot(\boldx_m^{D'}-\boldx_n^{D'})\boldalpha)\right)^2d\boldmu(\boldalpha)\nonumber\\
			&\ll\frac{1}{N^2}\int_{\R^d}\left(\sum_{\substack{D'\subseteq D}}\sum_{1\leq m\neq n\leq N}\sideset{}{'}\sum_{\boldj\in\Z^{d'}}c_0^{d-d'}c_{\boldj }e(\boldj\cdot(|\boldx_m^{D'}-\boldx_n^{D'}|)\boldalpha)\right)^2d\boldmu(\boldalpha),
		\end{align*}
		where $|\boldx_m^{D'}-\boldx_n^{D'}|$ means the component-wise absolute difference and $\sum^{'}$ means the sum over $ \boldj\in\Z^{d'}$ such that $1\leq j_l\leq (rN)^{1/d}$ for all $l\leq d'$. Let $U$ be the smallest integer such that $2^U\geq(rN)^{1/d}$.
		Now, by dyadic decomposition of the components of $ \boldj =(j_1,\ldots,j_{d'})\in\Z^{d'}$ and an application of the Cauchy-Schwarz inequality gives the upper bound of $\text{Var}(H_N,\boldmu)$ as
		\begin{align}\label{variance bound}	
			\ll\frac{1}{N^2}\int_{\R^d}&\left(\sum_{\substack{\|\boldu\|_\infty\leq U\\\boldu\in\N^d}}1\right)\sum_{\substack{\|\boldu\|_\infty\leq U\\\boldu\in\N^d}}\left|\sum_{\substack{D'\subseteq D}}\sum_{1\leq m\neq n\leq N}\sum_{\substack{(j_1,\ldots,j_{d'})\in\Z^{d'}\\2^{u_i-1}\leq|j_i|<2^{u_i}}}c_0^{d-d'}c_{\boldj }e(\boldj\cdot(|\boldx_m^{D'}-\boldx_n^{D'}|)\boldalpha)\right|^2d\boldmu(\boldalpha)\nonumber\\
			&\ll\frac{(\log N)^d}{N^4}\sum_{\substack{\|\boldu\|_\infty\leq U\\\boldu\in\N^d}}\sum_{\substack{D'\subseteq D}}\sum_{\substack{1\leq m\neq n\leq N\\1\leq k\neq l\leq N}}\sum_{\substack{\boldj,\boldt\in\N^{d'}\\2^{u_i-1}\leq j_i,t_i<2^{u_i}}}\mathds{1}_{\left\|\boldj(|\boldx_m^{D'}-\boldx_n^{D'}|)-\boldt(|\boldx_k^{D'}-\boldx_l^{D'}|)\right\|_\infty<1}.
		\end{align}	
		For fixed $\boldu \in \mathbb{N}^d$ and $D'\subseteq D$, define the multi-set \[\{\boldz_1^{D'},\dots,\boldz_M^{D'}\}=\{|\boldx_n^{D'}-\boldx_m^{D'}|:1\leq n\neq m\leq N\},\]
		where $M:=N^2-N$.
		In order to bound ~\eqref{variance bound}, it is enough to bound the following sum
		\begin{align}\label{mainineq}
			\sum_{\substack{\boldj,\boldt\in\N^{d'}\\2^{u_i-1}\leq j_i,t_i<2^{u_i}}}\sum_{1\leq m,n\leq M}\mathds{1}_{\left\|\boldj\boldz_m^{D'}-\boldt\boldz_n^{D'}\right\|_\infty <1}.
		\end{align}
		
		\section{Proof of Theorem~\ref{thm1}}\label{subsection bound of mainineq}
		To prove Theorem~\ref{thm1} we need to bound~\eqref{mainineq}.
		In the subsequent calculations we consider the subset $D'=\{1,\dots, d'\}\subseteq D,$ because for other subset of $D$ with cardinality $d'$ arguments are same. For simplicity, we use the notation $\boldz_n $ instead of $\boldz_n^{D'}\in\R^{d'}$ in the remaining proof.
		
		Let $\beta$ be a function defined by $\beta(2)=-1$ and $\beta(d)=+1$ for $d>2$. Further define $\Phi(y)=e^{-y^2/2}$, then its Fourier transform is $\widehat{\Phi}=\sqrt{2\pi}\Phi$. Also, assume that $\epsilon>0$ is fixed but sufficiently small real number.
		Now we divide the sum over $m,n$ into two cases based on the size of $\min(z^{(l)}_m,z^{(l)}_n)$, where $z^{(l)}_m$ and $z^{(l)}_n$ are the $l$-th  component of $\boldz_m$ and $\boldz_n$, respectively.
		\begin{case}
			$\min(z_m^{(l)},z_n^{(l)})< N^{1/d+\beta(d)\epsilon}$.
			In this case ~\eqref{mainineq} is bounded by,
			\begin{align}\label{bound of caes1}
				\sum_{\substack{\boldj,\boldt\in\N^{d'}\\2^{u_i-1}\leq j_i,t_i<2^{u_i}}}\sum_{\substack{1\leq m,n\leq M\\z_m^{(l)},z_n^{(l)}\leq 4N^{1/{d}+\beta(d)\epsilon}}}\mathds{1}_{\left\|\boldj\boldz_m-\boldt\boldz_n\right\|_\infty<1}.
			\end{align}			
			For fixed $\boldj,\boldz_m $ and $\boldz_n$ there are $\ll1$ many choices for $\boldt,$ because of the spacing assumption.
			One observe that there are $\ll N^{1+1/d+\beta(d)\epsilon}$ many $z_n$ which are $\leq 4N^{1/d+\beta(d)\epsilon}.$  In this case,  the upper bound of  ~\eqref{bound of caes1} is
			\begin{align} \label{case1}
				N^{2+2/d+2\beta(d)\epsilon}\prod_{l=1}^{d'}2^{u_l}\ll N^{2+2/d+d'/d+2\beta(d)\epsilon}\leq N^{3+2/d+2\beta(d)\epsilon}.
			\end{align}
		\end{case}

		\begin{case}
			$\min(z_m^{(l)},z_n^{(l)})\geq N^{1/d+\epsilon}$ for all $l\in D'.$\\
			For $\boldk\in\N^{d'}$,  the interval $[\boldk,\boldk+1 )$ means the product $[k_1,k_1+1)\times\ldots\times [k_{d'}, k_{d'}+1)$ and then define 
			\begin{align}\label{b}
				b_{\boldk}:=\sum_{n\leq N}\mathds{1}_{\boldz_n\in[\boldk,\boldk+1)}.
			\end{align}
			We split $\mathbb{N}^{d'}$ into the intervals $I_{\boldh}$, where $\boldh\in \mathbb{N}^{d'}\cup \{ \bold0\}$ and $I_{\boldh}$ is given by
			\begin{align*}
				I_{\boldh}:=\prod_{l=1}^{d'} \left[\left[\left(1+\frac{1}{T_l}\right)^{h_l}\right], \left[\left(1+\frac{1}{T_l}\right)^{h_l+1}\right]\right),
			\end{align*}
			where $T_l:=2^{u_l}N^{1/{d}+\epsilon/{2d}}$ for $1\leq l\leq d'$	and set 
			\begin{align}\label{a}
				a_{\boldh} :=\left(\sum_{\boldk\in I_{\boldh}}b_{\boldk}^2\right)^{1/2}.
			\end{align}
			Using the notation
			$\bold{y}_{\bold{T}}:= \left(\frac{y_1}{T_1},\ldots, \frac{y_{d'}}{T_{d'}}\right)$, we set 
			\begin{align*}
				\Phi\left(\boldy_{\bold{T}}\right):=\prod_{l=1}^{d'}\Phi\left(\frac{y_l}{T_l}\right).
			\end{align*}
			Then we introduce the function $P:\mathbb{R}^{d'}\rightarrow \mathbb{C}$, which is given by
			\begin{align}\label{P}
				P(\boldy)=\sum_{\boldh\geq 0}a_{\boldh}\prod_{l=1}^{d'}\left(1+\frac{1}{T_l}\right)^{ih_l y_l}.
			\end{align}
		\end{case}
		Similar to Lemma 2 in \cite{CA2021Real}, we also get its higher dimensional analogue  in the following lemma.
		\begin{lemma}\label{lemma P and additive}
			Let us set $T:=T_1 \ldots T_{d'}$ . Then
			\[\int_{\R^{d'}}|P(\boldy)|^2\Phi\left(\boldy_{\bold{T}}\right) d\boldy\ll TE(\boldX_N^{D'}).\]
		\end{lemma}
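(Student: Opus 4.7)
The plan is to expand $|P(\boldy)|^2=P(\boldy)\overline{P(\boldy)}$ as a double sum over $\boldh,\boldh'\geq \boldsymbol{0}$, interchange the sum with the integral, and evaluate each resulting Gaussian integral via the one-dimensional Fourier identity
\begin{equation*}
\int_{\R}\Phi(y/T_l)\,e^{i\xi y}\,dy \;=\; T_l\sqrt{2\pi}\,\Phi(T_l\xi).
\end{equation*}
Writing $(1+1/T_l)^{ih_l y_l}=\exp\!\bigl(ih_l y_l\log(1+1/T_l)\bigr)$ turns each factor coming from $P(\boldy)\overline{P(\boldy)}$ into a pure exponential, so that, using Fubini together with the product structure $\Phi(\boldy_{\boldsymbol{T}})=\prod_{l\leq d'}\Phi(y_l/T_l)$, I obtain
\begin{equation*}
\int_{\R^{d'}}|P(\boldy)|^2\Phi(\boldy_{\boldsymbol{T}})\,d\boldy \;=\; (2\pi)^{d'/2}\,T\sum_{\boldh,\boldh'\geq \boldsymbol{0}}a_{\boldh}a_{\boldh'}\prod_{l=1}^{d'}\Phi\!\bigl(T_l(h_l-h'_l)\log(1+1/T_l)\bigr).
\end{equation*}

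Next I exploit the elementary bound $T_l\log(1+1/T_l)\geq 1/2$, valid for all $T_l\geq 1$ and in particular for $T_l=2^{u_l}N^{1/d+\epsilon/(2d)}$, so that each $\Phi$-factor above decays like $e^{-(h_l-h'_l)^2/8}$. Applying AM--GM in the form $a_{\boldh}a_{\boldh'}\leq \frac{1}{2}(a_{\boldh}^2+a_{\boldh'}^2)$ and observing that, uniformly in $\boldh$, the sum $\sum_{\boldh'\geq \boldsymbol{0}}\prod_l e^{-(h_l-h'_l)^2/8}=O(1)$, the double sum is controlled by $\ll T\sum_{\boldh\geq \boldsymbol{0}}a_{\boldh}^2$.

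It then remains to show $\sum_{\boldh\geq \boldsymbol{0}}a_{\boldh}^2\ll E(\boldX_N^{D'})$. Since the boxes $I_{\boldh}$ partition (a subset of) $\N^{d'}$, the definition \eqref{a} of $a_{\boldh}$ gives $\sum_{\boldh\geq \boldsymbol{0}}a_{\boldh}^2=\sum_{\boldk}b_{\boldk}^2$. By \eqref{b}, $\sum_{\boldk}b_{\boldk}^2$ counts ordered pairs $\bigl((m_1,n_1),(m_2,n_2)\bigr)$, $m_i\neq n_i$, whose difference vectors $|\boldx_{m_i}^{D'}-\boldx_{n_i}^{D'}|$ lie in a common unit cube $[\boldk,\boldk+1)$; every such quadruple satisfies, componentwise in $l\in D'$,
\begin{equation*}
\bigl||x_{m_1}^{(l)}-x_{n_1}^{(l)}|-|x_{m_2}^{(l)}-x_{n_2}^{(l)}|\bigr|<1,
\end{equation*}
and opening the absolute values produces an inequality of the shape $|x_{r_1}^{(l)}+x_{r_2}^{(l)}-x_{r_3}^{(l)}-x_{r_4}^{(l)}|<1$ for a finite set of sign/index choices. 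Hence $\sum_{\boldk}b_{\boldk}^2\ll E(\boldX_N^{D'})$, completing the proof.

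The main technical obstacle, as in the one--dimensional analogue \cite[Lemma~2]{CA2021Real}, is to secure uniform Gaussian decay along all $d'$ directions and for every admissible $\boldu$; this is precisely what the choice $T_l=2^{u_l}N^{1/d+\epsilon/(2d)}$ guarantees, through the bound $T_l\log(1+1/T_l)\geq 1/2$. The rest of the argument is a componentwise lift of the one--dimensional case.
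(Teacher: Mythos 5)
Your proposal is correct and is essentially the same argument the paper gives (which itself is just a sketch deferring to \cite[Lemma~2]{CA2021Real}): expand $|P|^2$ as a double sum, evaluate the Gaussian integrals to pick up a factor $T$ and a product of rapidly decaying $\Phi$-factors, reduce to $\sum_{\boldh} a_{\boldh}^2 = \sum_{\boldk} b_{\boldk}^2$, and bound this by $E(\boldX_N^{D'})$ via the unit-cube counting. The only place you are slightly looser than needed is the final "opening absolute values" step, where the monotonicity hypothesis $x_{n+1}^{(l)}-x_n^{(l)}\geq c$ for each $l$ guarantees that the sign of $x_n^{(l)}-x_m^{(l)}$ is the same for all $l\in D'$, so the sign choice is genuinely uniform across components and the reduction to $E(\boldX_N^{D'})$ goes through with an $O(1)$ multiplicity.
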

		\begin{proof}
			Similar to the proof of~\cite[Lemma 2]{CA2021Real}, expanding the square and apply the Fourier transform on $\Phi(\boldy)$,  we get
			\begin{align*}
				\int_{\R^{d'}}|P(\boldy)|^2\Phi\left(\boldy_{\bold{T}}\right) d\boldy \ll T\sum_{\bold{h_1},\bold{h_2} \in \mathbb{N}^{d'}\cup \{ \bold0\} } a_{\bold{h_1}}a_{\bold{h_2}}\prod_{l=1}^{d'}\widehat{\Phi}\left(\frac{h_1^{(l)}-h_2^{(l)}}{2}\right) \ll  T\sum_{\bold{h} \in \mathbb{N}^{d'}\cup \{ \bold0\} } a_{\bold{h}}^2.
			\end{align*}
			The last inequality uses the rapid decay of $\widehat{\Phi}$. Then the definition \eqref{a} gives 
			\begin{align*}
				\sum_{\bold{h} \in \mathbb{N}^{d'}\cup \{ \bold0\} }a_{\bold{h}}^2 \ll \sum_{\bold{k} \in \mathbb{N}^{d'} }b_{\boldk}^2\ll \sum_{\substack{1\leq m,n\leq M\\ |z_m^{(l)} - z_n^{(l)}|<1, \: l\in D'}}1 \leq E(\bold{X}_N^{D'}).
			\end{align*}
			Hence the result follows.
		\end{proof}
		\begin{lemma}\label{lemma z to a}
			\[\sum_{\substack{\boldj,\boldt\in\Z^{d'}\\2^{u_l-1}\leq j_l,t_l<2^{u_l},\: l\in D'}}\sum_{1\leq m,n\leq M}\mathds{1}_{\left\|\boldj\boldz_m-\boldt\boldz_n\right\|_\infty<1} 
			\ll\sum_{\substack{\boldj,\boldt\in\Z^{d'}\\2^{u_l-1}\leq j_l,t_l<2^{u_l},\: l\in D'}}\sum_{\substack{\boldg,\boldh\in \mathbb{N}^{d'}\cup \{ \bold0\}\\\left|\left(1+\frac{1}{T_l}\right)^{h_l-g_l}-\frac{t_l}{j_l}\right|\leq \frac{4}{T_l},\: l\in D'}}a_{\boldg}a_{\boldh}.\]
		\end{lemma}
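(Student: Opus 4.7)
The plan is to pass from the original count (pairs $(m,n)$ indexed by actual values of $\boldz_m,\boldz_n$) to the RHS (involving the geometric intervals $I_{\boldh}$) in two stages: first translate the constraint $\|\boldj\boldz_m - \boldt\boldz_n\|_\infty < 1$ into a constraint on the indices $(\boldh,\boldg)$ that locate $\boldz_m,\boldz_n$; then use Cauchy--Schwarz to convert sums of $b_{\boldk}$ into sums of $a_{\boldh}$. Throughout I will use the Case~2 hypothesis $\min(z_m^{(l)},z_n^{(l)})\geq N^{1/d+\epsilon}$ and the size of $j_l\asymp 2^{u_l}$, which together make $j_l z_n^{(l)}$ exceed $T_l$ by a factor $\gg N^{\epsilon(1-1/(2d))}$.

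For each admissible $(\boldj,\boldt,m,n)$, let $\boldk_1,\boldk_2\in\N^{d'}$ be the unique integer points with $\boldz_m\in[\boldk_1,\boldk_1+1)$, $\boldz_n\in[\boldk_2,\boldk_2+1)$, and let $\boldh,\boldg$ be the unique multi-indices with $\boldk_1\in I_{\boldh}$, $\boldk_2\in I_{\boldg}$. Dividing $|j_l z_m^{(l)} - t_l z_n^{(l)}|<1$ by $j_l z_n^{(l)}$ gives
\[
\left|\frac{t_l}{j_l} - \frac{z_m^{(l)}}{z_n^{(l)}}\right| < \frac{1}{j_l z_n^{(l)}} \leq \frac{1}{T_l}
\]
for $N$ large, by the estimate above. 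Separately, the geometric structure of $I_{\boldh},I_{\boldg}$ implies $z_m^{(l)}/z_n^{(l)} = (1+1/T_l)^{h_l-g_l+\xi}$ with $|\xi|\leq 2$, and since $(1+1/T_l)^{h_l-g_l}\asymp t_l/j_l$ is bounded, expansion gives $|z_m^{(l)}/z_n^{(l)} - (1+1/T_l)^{h_l-g_l}| \leq 3/T_l$. Combining the two estimates by the triangle inequality yields the required $|(1+1/T_l)^{h_l-g_l} - t_l/j_l|\leq 4/T_l$ for every $l\in D'$, which selects the admissible $(\boldh,\boldg)$ on the RHS.

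It remains to bound, for fixed admissible $(\boldj,\boldt,\boldh,\boldg)$, the contribution
\[
\sum_{\boldk_1\in I_{\boldh},\,\boldk_2\in I_{\boldg}} b_{\boldk_1} b_{\boldk_2}\cdot \mathds{1}_{\exists(m,n):\,\|\boldj\boldz_m-\boldt\boldz_n\|_\infty<1}.
\]
The indicator forces $|j_l k_1^{(l)} - t_l k_2^{(l)}| \leq 1+j_l+t_l \leq 3\cdot 2^{u_l}$, so for each fixed $\boldk_2$ the coordinate $k_1^{(l)}$ is confined to an interval of length $\leq 12$ (using $j_l\geq 2^{u_l-1}$); hence there are $O(1)$ admissible $\boldk_1$ per $\boldk_2$, and symmetrically. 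By Cauchy--Schwarz,
\[
\sum_{\boldk_1,\boldk_2} b_{\boldk_1} b_{\boldk_2} \mathds{1}_{\text{adm}} \leq \Bigl(\sum_{\boldk_2\in I_{\boldg}} b_{\boldk_2}^2\Bigr)^{1/2}\Bigl(\sum_{\boldk_2\in I_{\boldg}}\Bigl(\sum_{\boldk_1\text{ adm}} b_{\boldk_1}\Bigr)^2\Bigr)^{1/2},
\]
and the inner square is $\ll \sum_{\boldk_1\text{ adm}} b_{\boldk_1}^2$ by Cauchy--Schwarz on $O(1)$ terms; swapping the order so that each $\boldk_1\in I_{\boldh}$ appears $O(1)$ times bounds the whole double sum by $\ll a_{\boldh}^2$. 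This gives $\ll a_{\boldg}a_{\boldh}$ and finishes the proof once we sum over $(\boldj,\boldt,\boldh,\boldg)$. The most delicate step is Step~2 (deriving the $4/T_l$ bound), where careful bookkeeping of the two error sources $1/(j_l z_n^{(l)})$ and the geometric discretisation is essential; the Cauchy--Schwarz step is standard once the $O(1)$-per-fiber observation is in place.
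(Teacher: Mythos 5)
Your overall strategy is the same as the paper's (which simply defers to \cite[Lemma 3]{CA2021Real} and its $d'$-fold iteration): pass from $(\boldz_m,\boldz_n)$ to the integer boxes $\boldk_1,\boldk_2$, then to the geometric cells $I_{\boldh},I_{\boldg}$, establish a constraint on $(\boldh,\boldg)$ from the inequality $\|\boldj\boldz_m-\boldt\boldz_n\|_\infty<1$, and finally convert the count into $a_{\boldh}a_{\boldg}$ by Cauchy--Schwarz using that each $\boldk_1$ matches $O(1)$ values of $\boldk_2$ and vice versa. The Cauchy--Schwarz half of your argument is sound (the ``$O(1)$-per-fiber'' observation and the two applications of Cauchy--Schwarz are exactly what is needed to produce $a_{\boldh}a_{\boldg}$).

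However, the geometric step is stated but not actually proved, and it is here that the real delicacy lies. You assert that $z_m^{(l)}/z_n^{(l)}=(1+1/T_l)^{h_l-g_l+\xi}$ with $|\xi|\le 2$ and deduce $\bigl|z_m^{(l)}/z_n^{(l)}-(1+1/T_l)^{h_l-g_l}\bigr|\le 3/T_l$. But passing first from $z_m^{(l)}$ to $k_1^{(l)}=\lfloor z_m^{(l)}\rfloor$ and then to the integer-part endpoints $\bigl[(1+1/T_l)^{h_l}\bigr]$ of $I_{\boldh}$ introduces an \emph{additive} error of size $O(1)$, hence a multiplicative error of size $O\bigl(1/(1+1/T_l)^{\min(h_l,g_l)}\bigr)=O(1/\min(z_m^{(l)},z_n^{(l)}))=O(N^{-1/d-\epsilon})$. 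For this to be dominated by $1/T_l$ one needs $T_l=2^{u_l}N^{1/d+\epsilon/2d}\ll N^{1/d+\epsilon}$, i.e.\ $2^{u_l}\ll N^{\epsilon(1-1/(2d))}$; but $u_l$ ranges up to $U$ with $2^{U}\asymp N^{1/d}$, so this fails for the larger dyadic blocks. In that regime the rounding error exceeds the stated tolerance $4/T_l$, and as written your Step~2 does not yield the displayed constraint on $(\boldh,\boldg)$. You acknowledge that the bookkeeping is ``essential'' but you do not carry it out; the proof must either handle the large-$u_l$ range by a separate argument (or a different $T_l$-normalisation), or explicitly verify that the rounding losses can be absorbed, before the lemma as stated is established.

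Since the paper's own ``proof'' is a bare citation, I cannot compare to the precise intended details; but the gap above is a genuine one in your write-up and is exactly the sort of thing a reader following the reference to \cite{CA2021Real} would need to check coordinate-by-coordinate.
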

		\begin{proof}
			For $d'=1$, the proof is essentially identical with the proof of~\cite[Lemma 3]{CA2021Real}. For $d'>1$, we apply the same method for $d'$ many components in order to get the result.
		\end{proof}
		
		Let $K:\mathbb{R}\rightarrow \mathbb{R}^{+}$ be a function defined by 
		\begin{align}\label{definition of K}
			K(\xi)=\frac{\sin^2\left(\left(1/d+\delta/{d}\right)\xi\log N\right)}{\pi \xi^2\left(1/d+\delta/{d}\right)\log N},
		\end{align}
		for some positive real parameter $\delta$ and positive integer $d$. Its Fourier transform is given by 
		\begin{align}\label{definition of K hat}
			\widehat{K}(v)=\max\left(1-\frac{v}{2\left(1/d+\delta/{d}\right)\log N}, 0\right).
		\end{align}
		
		\begin{lemma}\label{lemma discrete to smooth}
			\begin{align*}
				&\sum_{\substack{\boldj,\boldt\in\Z^{d'}\\2^{u_l-1}\leq j_l,t_l<2^{u_l},\:l\in D'}}\sum_{\substack{\boldg,\boldh\in \mathbb{N}^{d'}\cup \{ \bold0\}\\\left|\left(1+\frac{1}{T_l}\right)^{h_l-g_l}-\frac{t_l}{j_l}\right|,\: l \in D'}}a_{\boldg}a_{\boldh}\\
				&\ll\frac{2^{u_1}\cdots2^{u_{d'}}}{T}\int_{\R^{d'}}\sum_{\boldj,\boldt\in \mathbb{N}^{d'}}\prod_{l=1}^{d'}\frac{\widehat{K}(\log j_lt_l)}{(j_lt_l)^{1/2}}\left(\frac{j_l}{t_l}\right)^{iy_l}|P(\boldy)|^2\Phi\left(\boldy_{\bold{T}}\right) d\boldy.
			\end{align*}
		\end{lemma}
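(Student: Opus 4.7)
The plan follows the one-dimensional template of \cite[Lemma~4]{CA2021Real} applied coordinate by coordinate: first I smooth the hard constraint on $(\boldg,\boldh)$ by a product of Gaussians, then I rewrite the smoothed count as an integral of $|P(\boldy)|^2\Phi(\boldy_{\bold{T}})$ via Fourier inversion in each coordinate, and finally I enlarge the dyadic sum over $(\boldj,\boldt)$ to $\mathbb{N}^{d'}$ by exploiting positivity of $\widehat{K}$.

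For the smoothing step, set $\xi_l:=(h_l-g_l)\log(1+1/T_l)+\log(j_l/t_l)$ for $l\in D'$. Since $j_l,t_l\in[2^{u_l-1},2^{u_l})$ forces $t_l/j_l\asymp 1$, taking logarithms of the constraint $|(1+1/T_l)^{h_l-g_l}-t_l/j_l|\leq 4/T_l$ gives $|\xi_l|\ll 1/T_l$, and hence $\Phi(T_l\xi_l)\gg 1$. Thus the indicator of the constraint is dominated by $\prod_{l\in D'}\Phi(T_l\xi_l)$. Expanding $|P(\boldy)|^2=\sum_{\boldg,\boldh}a_{\boldg}a_{\boldh}\prod_{l}(1+1/T_l)^{i(h_l-g_l)y_l}$ and applying the scalar Gaussian identity $\int_{\R}e^{iy_l\xi_l}\Phi(y_l/T_l)\,dy_l=\sqrt{2\pi}\,T_l\,\Phi(T_l\xi_l)$ in each coordinate yields
\[
\sum_{\boldg,\boldh}a_{\boldg}a_{\boldh}\prod_{l\in D'}\Phi(T_l\xi_l)\ =\ \frac{1}{(2\pi)^{d'/2}T}\int_{\R^{d'}}|P(\boldy)|^2\Phi(\boldy_{\bold{T}})\prod_{l\in D'}(j_l/t_l)^{iy_l}\,d\boldy.
\]

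For the extension step, on the dyadic box one has $(j_lt_l)^{1/2}\asymp 2^{u_l}$, and since $2^{u_l}\leq 2^U\ll N^{1/d}$, also $\log(j_lt_l)\leq (2/d)\log N+O(1)<2(1/d+\delta/d)\log N$. By~\eqref{definition of K hat} this forces $\widehat{K}(\log j_lt_l)\gg 1$, so the factor $\prod_{l}2^{u_l}\widehat{K}(\log j_lt_l)(j_lt_l)^{-1/2}$ is $\gg 1$ on the box and may be freely inserted into each box-term of the resulting estimate. Finally, enlarging the outer sum to all of $(\boldj,\boldt)\in\mathbb{N}^{d'}$ produces an upper bound: after exchanging the $\boldy$-integral with the discrete sums by Fubini, the right-hand side equals
\[
(2\pi)^{d'/2}\,2^{u_1}\cdots 2^{u_{d'}}\sum_{\boldj,\boldt\in\mathbb{N}^{d'}}\sum_{\boldg,\boldh}a_{\boldg}a_{\boldh}\prod_{l\in D'}\widehat{K}(\log j_lt_l)(j_lt_l)^{-1/2}\Phi(T_l\xi_l),
\]
and every factor here is non-negative ($a_{\boldh}\geq 0$ by~\eqref{a}, $\widehat{K}\geq 0$ by~\eqref{definition of K hat}, and $\Phi\geq 0$), so adding extra $(\boldj,\boldt)$-terms only increases the total. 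This yields the stated inequality.

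The only delicate point is the final extension to $\mathbb{N}^{d'}$: the integrand contains the oscillatory factor $\prod_l(j_l/t_l)^{iy_l}$, so pointwise positivity in $\boldy$ is not visible. Positivity becomes visible only after Fubini and the Gaussian identity of the previous paragraph, which collapses the $\boldy$-integration to the explicit non-negative combination above. With this in hand, the $d'$-dimensional argument is a product, coordinate by coordinate, of the one-dimensional reasoning from \cite[Lemma~4]{CA2021Real}.
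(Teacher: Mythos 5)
Your proposal is correct and fills in exactly the argument the paper gestures at: the paper's own proof consists of the single observation $2^{u_1}\cdots 2^{u_{d'}}\asymp\prod_l (j_l t_l)^{1/2}$ plus a pointer to \cite[Lemma 4]{CA2021Real}, and your coordinate-by-coordinate tensor version of that one-dimensional argument (Gaussian smoothing of the constraint, Fourier inversion to reach $|P(\boldy)|^2\Phi(\boldy_{\bold T})$, insertion of $\widehat{K}(\log j_lt_l)(j_lt_l)^{-1/2}2^{u_l}\gg 1$ on the dyadic box, and extension to $\mathbb{N}^{d'}$ by positivity after Fubini) is precisely what is intended. Your remark that positivity for the extension is visible only after collapsing the $\boldy$-integral back to the non-negative discrete Gaussian sum is the right way to justify the last step.
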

		\begin{proof}
			Note that $2^{u_1}\cdots2^{u_{d'} }\ll \prod_{l=1}^{d'}(j_lt_l)^{1/2}\ll 2^{u_1}\cdots2^{u_{d'}}$. This fact combining with the properties of $\Phi$ as well as $\widehat{K}$, one can obtain this lemma. Also one may see the proof of~\cite[Lemma 4]{CA2021Real} for one dimensional version.
		\end{proof}
		
		\begin{lemma}\label{lemma From K to additive}
			\begin{align*}
				&\frac{2^{u_1}\cdots2^{u_{d'}}}{T}\int_{\R^{d'}}\sum_{\boldj,\boldt\geq 1}\prod_{l=1}^{d'}\frac{\widehat{K}(\log j_lt_l)}{(j_lt_l)^{1/2}}\left(\frac{j_l}{t_l}\right)^{iy_l}|P(\boldy)|^2\Phi\left(\boldy_{\bold{T}}\right)d\boldy\\
				&\ll N^{4-\epsilon d'/4d}+N^{\frac{152d'-31}{89d}+\frac{52}{89}+5\epsilon}E(\boldX_N^{D'})^{76/89}.
			\end{align*}
		\end{lemma}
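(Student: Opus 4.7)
The plan is to apply Hölder's inequality to separate the $|P(\boldy)|^2\Phi$ factor (controlled by Lemma \ref{lemma P and additive}) from the zeta-like factor $\prod_{l=1}^{d'}Z_l(y_l)$, where $Z_l(y_l):=\sum_{j,t\geq1}\widehat{K}(\log jt)(jt)^{-1/2}(j/t)^{iy_l}$. Setting $W:=\int\prod_l Z_l(y_l)|P(\boldy)|^2\Phi(\boldy_{\boldsymbol{T}})d\boldy$, I would split
\begin{align*}
W=\int\bigl(|P|^{2\cdot 76/89}\Phi^{76/89}\bigr)\cdot\Bigl(\prod_l Z_l\cdot|P|^{2\cdot 13/89}\Phi^{13/89}\Bigr)d\boldy
\end{align*}
and apply Hölder with conjugate exponents $89/76$ and $89/13$ to obtain
\begin{align*}
W\leq\Bigl(\int|P|^2\Phi\,d\boldy\Bigr)^{76/89}\Bigl(\int\prod_l|Z_l|^{89/13}|P|^2\Phi\,d\boldy\Bigr)^{13/89}.
\end{align*}
Lemma \ref{lemma P and additive} bounds the first factor by $(TE(\boldX_N^{D'}))^{76/89}$, which is the source of the $E^{76/89}$ exponent in the target bound.

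For the second factor I would exploit the fact that $Z_l(y_l)$ is a smoothly truncated Dirichlet-series representation of $|\zeta(1/2+iy_l)|^2$: since $\widehat{K}(\log jt)$ is supported on $jt\leq N^{(2/d)(1+\delta)}$, a Perron-type contour shift gives $Z_l(y_l)=|\zeta(1/2+iy_l)|^2+(\text{lower order})$ uniformly for $|y_l|\ll N^{1/d+\delta/d}$. Bounding $|P(\boldy)|\leq\sum_{\boldh}a_{\boldh}\leq M\leq N^2$ pointwise (a consequence of $(\sum b_{\boldk}^2)^{1/2}\leq\sum b_{\boldk}$ and $\sum_{\boldk}b_{\boldk}=M$) and invoking the factorization of $\Phi(\boldy_{\boldsymbol{T}})$ together with Fubini, the second factor reduces to a product $\prod_l\int_\R|Z_l(y_l)|^{89/13}\Phi(y_l/T_l)\,dy_l$. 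Each of these one-dimensional integrals is controlled by Heath-Brown's twelfth-moment bound $\int_{-T}^T|\zeta(1/2+it)|^{12}dt\ll T^{2+\epsilon}$, combined with the pointwise subconvex bound $|\zeta(1/2+it)|\ll|t|^{1/6+\epsilon}$ to handle the slight excess $89/13>6$. Collecting everything and multiplying by the prefactor $\prod_l 2^{u_l}/T=N^{-d'/d+O(\epsilon)}$ produces the advertised main term $N^{(152d'-31)/(89d)+52/89+5\epsilon}E(\boldX_N^{D'})^{76/89}$.

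For the first summand $N^{4-\epsilon d'/(4d)}$ of the bound, one instead handles the opposing regime where the Hölder estimate is wasteful (e.g.\ when $E(\boldX_N^{D'})$ is atypically small, or the trivial pointwise bound on $W$ dominates). Here I would use directly that $|P|^2\leq M^2\leq N^4$, that $|Z_l(y_l)|\ll(\log N)^{O(1)}$ uniformly on the effective support of $\widehat{K}$, and that $\Phi(\boldy_{\boldsymbol{T}})$ decays rapidly outside $|y_l|\leq T_l N^{\epsilon/(8d)}$; after integrating and multiplying by the prefactor, the decay of $\Phi$ against the slightly smaller effective range of $y_l$ in the integrand yields an $\epsilon d'/(4d)$-savings, producing the first term.

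The main obstacle is the precise bookkeeping of the exponent $(152d'-31)/(89d)+52/89$ in the main term: while the choice of Hölder conjugates $(89/76,89/13)$ is essentially forced by the shape of the twelfth-moment bound (the $76/89$ in the statement reflects exactly this optimum), verifying that the contributions from $T=\prod_l T_l$, from $\prod_l 2^{u_l}\leq N^{d'/d}$, and from the Mellin/contour error terms aggregate to exactly the advertised exponent requires careful tracking. The product structure of both $\prod_l Z_l(y_l)$ and $\Phi(\boldy_{\boldsymbol{T}})$ ensures the $d'$-dimensional problem factorizes cleanly via Fubini into one-dimensional estimates, so the argument is a clean extension of the one-dimensional Lemma~5 of \cite{CA2021Real} rather than requiring genuinely new analytic input.
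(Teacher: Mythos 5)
Your Hölder split with exponents $89/76$ and $89/13$ is exactly the right shape, and the way you extract $(TE(\boldX_N^{D'}))^{76/89}$ from the first factor via Lemma~\ref{lemma P and additive} mirrors the paper's Subcase~1. But two substantive gaps remain.

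\emph{The moment estimate is not strong enough.} With your Hölder exponents the zeta factor forces you to control $\int_0^{T_l}|\zeta(1/2+it)|^{178/13}\,dt$. Your proposed route (Heath--Brown's twelfth moment $\int_0^T|\zeta|^{12}\ll T^{2+\epsilon}$ plus the Weyl subconvexity bound $|\zeta(1/2+it)|\ll|t|^{1/6+\epsilon}$ for the excess $22/13$) gives $T^{2+\frac{22}{13}\cdot\frac16+\epsilon}=T^{89/39+\epsilon}$. The paper instead invokes Ivi\'c's sharper moment estimate \eqref{zeta moment}, $\int_0^T|\zeta(1/2+it)|^{178/13}\,dt\ll T^{29/13+\epsilon}=T^{87/39+\epsilon}$, which requires a subconvexity exponent of $3/22$, strictly smaller than Weyl's $1/6$. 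The gap $T^{2/39}$ per variable propagates into a strictly larger power of $T\ll N^{2d'/d+\epsilon}$ and would \emph{not} produce the exponent $\frac{152d'-31}{89d}+\frac{52}{89}$ in the statement; you would only get a weaker version of the lemma, hence a weaker Theorem~\ref{thm1} and Theorem~\ref{thmoc}.

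\emph{The contour-shift step is genuinely more delicate.} You assert $Z_l(y_l)=|\zeta(1/2+iy_l)|^2+(\text{lower order})$. What the bridge lemma (Lemma~\ref{bridge lemma}) actually gives is a convolution $\int_\R\zeta(1/2+iy_l+iu)\overline{\zeta(1/2-iy_l+iu)}F(u)\,du$ plus the two residue terms $2\pi\zeta(1\pm2iy_l)F(\mp y_l-i/2)$. The convolution is \emph{not} pointwise $|\zeta(1/2+iy_l)|^2$ — the kernel $F=K$ spreads mass over a window of width $\asymp\log N$ in $u$, which is why the paper must further split the $u$-integration by $|u_l|\lessgtr T_l$. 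And the residue terms $D_2,D_3$ are not negligible: after multiplying by the prefactor $\prod_l2^{u_l}/T=N^{-d'/d-d'\epsilon/(2d)}$ they, together with the small-$|y_l|$ region, produce precisely the first summand $N^{4-\epsilon d'/(4d)}$. Your description of that term as arising from ``the regime where Hölder is wasteful'' misattributes its origin; it comes from the pole of $\zeta$ at $1$ and from the region $|y_l|\leq1$ where the Dirichlet series cannot be traded for a zeta moment at all. To make your argument rigorous you would have to carry out the $3^{d'}$-term expansion $\prod_l(D_1+D_2+D_3)$ and the $\mathcal{D}$-splitting of the $y$-integral essentially as the paper does.
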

		Therefore, in Case $2$, from Lemma~\ref{lemma z to a}, Lemma~\ref{lemma discrete to smooth} and Lemma~\ref{lemma From K to additive} the upper bound of~\eqref{mainineq}
		\begin{align}\label{case 2 bound}
			N^{4-\epsilon d'/4d}+N^{\frac{152d'-31}{89d}+\frac{52}{89}+5\epsilon}E(\boldX_N^{D'})^{76/89}.
		\end{align}

		In order to prove Lemma~\ref{lemma From K to additive} we  we need the following technical lemma, which is  essentially the Lemma 5.3 of \cite{BT} or \cite[Lemma 6]{CA2021Real}.
		\begin{lemma}~\label{bridge lemma}
			Let $\sigma\in(-\infty,1)$ and let $F$ be a holomorphic function in the strip $y=\Im z\in[\sigma-2,0]$ such that 
			\[\sup_{\sigma-2\leq y\leq 0}|F(x+iy)|\leq\frac{1}{x^2+1}.\]
			Then for all $s=\sigma+it\in \mathbb{C}$ with $ t\neq 0,$ we have
			\[\sum_{j,t\geq 1}\frac{\widehat{K}(\log jt)}{j^st^{\bar{s}}}=\int_{\R}\zeta(s+iu)\overline{\zeta(s-iu)}F(u)du+2\pi\zeta(1-2it)F(is-i)+2\pi\zeta(1+2it)F(i\bar{s}-i).\]
		\end{lemma}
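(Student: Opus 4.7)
The strategy is to prove the identity by a contour shift applied to the first integral on the right-hand side. The hypotheses on $F$ --- holomorphy in the strip $\Im z \in [\sigma-2, 0]$ together with the decay $|F(x+iy)| \le 1/(x^2+1)$ --- are precisely what make the shift go through. Implicit in the formula (and built into the choice of $F$ used in the application) is a Fourier-type relation $\int_{\Im u = \sigma-2} F(u)(jt)^{-iu}\,du = \widehat K(\log jt)$ that makes the final identification of the shifted integral with the left-hand side possible; this is consistent with the explicit form of $\widehat{K}$ in \eqref{definition of K hat}.

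First, using $\overline{\zeta(s-iu)} = \zeta(\bar s + iu)$ (valid since $\zeta$ has real Dirichlet coefficients), rewrite the RHS integral as $\int_\R F(u)\zeta(s+iu)\zeta(\bar s+iu)\,du$. Shift this contour from $\Im u = 0$ down to $\Im u = \sigma-2$, remaining in the strip where $F$ is holomorphic. The integrand has exactly two simple poles in this strip: $u_0 = i(s-1)$ coming from $\zeta(s+iu)$ and $u_1 = i(\bar s-1)$ coming from $\zeta(\bar s+iu)$, both at height $\Im u = \sigma-1$ and distinct because $t \ne 0$. A direct computation using the simple poles of $\zeta$ at $1$ (together with the Jacobian factor $1/i$ in $u$) yields residues $-i\zeta(1-2it)F(is-i)$ and $-i\zeta(1+2it)F(i\bar s-i)$ respectively; the residue theorem then gives
\[
\int_\R F(u)\zeta(s+iu)\zeta(\bar s+iu)\,du + 2\pi\bigl[\zeta(1-2it)F(is-i) + \zeta(1+2it)F(i\bar s-i)\bigr] = \int_{\Im u=\sigma-2} F(u)\zeta(s+iu)\zeta(\bar s+iu)\,du.
\]

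On the shifted line one has $\Re(s+iu) = \Re(\bar s+iu) = 2$, placing both zeta factors inside the region of absolute convergence of their Dirichlet series. Expanding $\zeta(s+iu)\zeta(\bar s+iu) = \sum_{j,t\ge 1} (jt)^{-iu}/(j^s t^{\bar s})$ and interchanging sum and integral gives
\[
\int_{\Im u = \sigma-2} F(u)\zeta(s+iu)\zeta(\bar s+iu)\,du = \sum_{j,t \geq 1} \frac{1}{j^s t^{\bar s}}\int_{\Im u = \sigma-2} F(u)(jt)^{-iu}\,du.
\]
Applying the Fourier-type relation identifies the inner integral with $\widehat K(\log jt)$, which produces the left-hand side of the lemma and completes the identity.

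The main technical obstacle is justifying the contour shift, specifically that the horizontal segments at $\Im z = \pm T$ contribute nothing as $T \to \infty$. The hypothesis $|F(x+iy)| \le 1/(x^2+1)$ is precisely what is needed here: it gives absolute integrability in the real direction uniformly across the strip, and combined with the convexity bound $\zeta(\sigma'+it') \ll (1+|t'|)^{(1-\sigma')/2+\varepsilon}$ valid for $\sigma' \in [\sigma, 2]$, it absorbs the polynomial growth of $\zeta$ on vertical lines. The remaining subtlety --- interchanging sum and integral in the expansion step --- is immediate once absolute convergence on the line $\Im u = \sigma-2$ is in hand.
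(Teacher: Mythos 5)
The paper does not prove this lemma; it attributes it to \cite[Lemma~5.3]{BT} and \cite[Lemma~6]{CA2021Real}, so there is no in-paper proof to compare against. Your contour-shift reconstruction is correct and is the standard argument behind those sources: the rewriting $\overline{\zeta(s-iu)}=\zeta(\bar s + iu)$ for real $u$, the shift from $\Im u=0$ down to $\Im u=\sigma-2$, the two simple poles at $u=i(s-1)$ and $u=i(\bar s-1)$ (distinct precisely because $t\neq 0$) with residues $-i\,\zeta(1-2it)F(is-i)$ and $-i\,\zeta(1+2it)F(i\bar s -i)$, and the Dirichlet-series expansion on the shifted line where $\Re(s+iu)=\Re(\bar s+iu)=2$ all check out and assemble to the stated identity. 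You also correctly diagnosed that the lemma as printed is imprecise: the $\widehat K$ on the left-hand side is really the Fourier transform of the $F$ in the hypothesis (in the application the lemma is invoked with $F=K$, which is why the paper writes $\widehat K$). Your ``Fourier-type relation'' $\int_{\Im u=\sigma-2}F(u)(jt)^{-iu}\,du=\widehat F(\log jt)$ is exactly what closes the argument, and it is legitimate because $\Im u\le 0$ and $\log(jt)\ge 0$ make $(jt)^{-iu}$ bounded, so $F$'s holomorphy and decay let you push this inner contour back to $\R$. One small slip in your write-up: the connecting segments whose contribution must vanish as $T\to\infty$ lie on $\Re u=\pm T$ (they are vertical in the $u$-plane), not on $\Im z=\pm T$; your estimate $|F|\ll 1/(T^2+1)$ against the polynomial (convexity) growth of $\zeta$ still kills them, so the substance is right.
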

		\begin{proof}[Proof of Lemma~\ref{lemma From K to additive}]
			Let us set 
			\begin{align}\label{G}
				G(y):=\displaystyle\sum_{j,t\geq 1}\frac{\hat{K}(\log jt)}{(jt)^{1/2}}\left(\frac{j}{t}\right)^{iy},
			\end{align}		
			and 
			\[G(\boldy):=G(y_1)\cdots G(y_{d'})=\sum_{\boldj,\boldt\geq 1}\prod_{l=1}^{d'}\frac{\widehat{K}(\log j_lt_l)}{(j_lt_l)^{1/2}}\left(\frac{j_l}{t_l}\right)^{iy_l}.\]
			Applying Lemma~\ref{bridge lemma} on each $G(y_l)$, where $y_l\neq 0$, we write
			\[G(\boldy)=\prod_{l=1}^{d'}[D_1(y_l)+D_2(y_l)+D_3(y_l)],\] where
			\begin{align*}
				&D_1(y_l):=\int_{\R}\zeta(1/2+iy_l+iu_l)\zeta(1/2-iy_l+iu_l)K(u_l)du_l,\\
				&D_2(y_l):=2\pi\zeta(1-2iy_l)F(-y_l-i/2),\\
				&D_3(y_l):=2\pi\zeta(1+2iy_l)F(y_l-i/2).
			\end{align*}
			Then term by term product gives 
			\begin{align}\label{G expression}
				G(\boldy)=\sum_{\eta_1,\dots,\eta_{d'}\in\{1,2,3\}}D_{\eta_1}(y_1)\cdots D_{\eta_{d'}}(y_{d'}),\text{ where } y_l\neq 0,\: \forall l\in D'.
			\end{align}
			From the definition of $P$ in \eqref{P}, we deduce that
			\begin{align}\label{P bound}
				|P(\boldy)|^2\leq&\left(\sum_{\boldh\geq 0}a_{\boldh}\right)^2\ll\left(\sum_{\boldk\geq 	1}b_{\boldk}\right)^2\ll N^4,\:\: \forall\boldy\in\R^{d'}.
			\end{align}
			Further, we have
			\begin{align}
				&|G(y)|\leq \sum_{\log(jt)\leq 2(1/{d}+\delta/{d})\log N}\frac{1}{(jt)^{1/2}}
				\leq\sum_{jt\leq N^{2(1/d+\delta/{d})}}\frac{1}{(jt)^{1/2}}
				\ll N^{1/d+\delta/{d}}\label{G bound},\\
				&|K(\pm y-i/2)|\ll N^{1/d+\delta/d}/(y+1)^2\label{K bound}.
			\end{align}
			Let us denote
			\begin{align*}
				I=\int_{\R^{d'}}G(\boldy)|P(\boldy)|^2\Phi\left(\boldy_{\bold{T}}\right)d\boldy.
			\end{align*}
			Now we bound the integral $I$ by breaking the range of integrals in several parts as follows:	
			\begin{align}\label{partition of I}
				I=\sum_{\mathcal{D}\subseteq D'}\int_{\substack{|y_l|\leq1,\:l\in \mathcal{D}\\ |y_l|>1,\:l\in D'\setminus \mathcal{D}}} G(\boldy)|P(\boldy)|^2\Phi\left(\boldy_{\bold{T}}\right)d\boldy =\sum_{\mathcal{D}\subseteq D'}I_{\mathcal{D}}~~~~~~ \mbox{ (say).}
			\end{align}
			For $\mathcal{D}=D'$, by employing \eqref{P bound} and ~\eqref{G bound} we get $I_{D'}\ll N^{4+d'/d+\delta d'/d}.$\\
			When $\mathcal{D}=\phi,$ the empty set, using ~\eqref{G expression} we want to estimate the upper bound  of $I_{\phi}.$ Thus
			\begin{align*}
				I_{\phi} \ll& \sum_{\substack{\eta_i\in\{1,2,3\}\\ i=1,\ldots,d'}}\int_{|y_l|>1,\:l \in D'}|D_{\eta_1}(y_1)\cdots D_{\eta_{d'}}(y_{d'})||P(\boldy)|^2\Phi\left(\boldy_{\bold{T}}\right)d\boldy
				= \sum_{\substack{\eta_i\in\{1,2,3\}\\ i=1,\ldots,d'}}I_{\eta_1,\dots,\eta_{d'}} ~~~\mbox{ (say)}.
			\end{align*}
			Now we bound $I_{\eta_1,\dots,\eta_{d'}}$ in three sub cases depending on the values of $\eta_1,\dots,\eta_{d'}$ as follows:
			\begin{itemize}
				\item[(i)] $\eta_1=\cdots=\eta_{d'}=1,$
				\item[(ii)] $\eta_1,\ldots,\eta_{d'}\in\{2,3\},$
				\item[(iii)] for some $l,k\in D'$ $\eta_l=1$ and $ \eta_k=2$ or $3$.
			\end{itemize}
			\subsubsection*{Subcase 1:} Assume $\eta_1=\cdots=\eta_{d'}=1$.
			In this case, we want to estimate
			\begin{align*}
				I_{1,\dots,1}= \int_{|y_l|>1,\: l\in D'}|P(\boldy)|^2\Phi\left(\boldy_{\bold{T}}\right)\int_{\mathbb{R}^{d'}}\prod_{l=1}^{d'}|\zeta(1/2+iy_l+iu_l)\zeta(1/2-iy_l+iu_l)K(u_l)|d\boldu d\boldy.
			\end{align*}
			Let us denote the inner integral ( integral over $\mathbb{R}^{d'}$)	in the above inequality by $N_{d'}(\boldy,\bold{T})$. Then we rewrite it as
			\begin{align}\label{N(y,T)}
				N_{d'}(\boldy,\bold{T})= \sum_{\widetilde{D}\subseteq D'}\int_{\substack{|u_l|\leq T_l,\: l\in\widetilde{D}\\|u_l|> T_l, \: l\in D'\setminus\widetilde{D}}} \prod_{l=1}^{d'}|\zeta(1/2+iy_l+iu_l)\zeta(1/2-iy_l+iu_l)K(u_l)|d\boldu.
			\end{align}
			For $\widetilde{D} = \phi$, using the trivial convexity bound of Riemann zeta function, $|\zeta(1/2+it)|\ll|t|^{1/4}$ and the upper bound $K(u_l)\ll u_l^{-2}$, the contribution to $N_{d'}(\boldy,\bold{T})$ is 
			\begin{align*}
				\ll \prod_{l=1}^{d'}\left(|y_l|^{1/2}/T_l+1/T_l^{1/2}\right).
			\end{align*}
			Consequently the contribution to $I_{1,\dots,1}$ in this case is given by $\ll N^4 T^{1/2}$, which is negligible.
			
			Next, we treat the case $\widetilde{D} = D' $. In this case, from \eqref{N(y,T)} we see that the range of the integral of any variable $u_l$ is $[-T_l, T_l]$.  We would use the following moment estimate of Riemann zeta function due to Ivi\'c (see \cite[Theorem 8.3]{I}). 
			\begin{align}\label{zeta moment}
				\int_{0}^{T}|\zeta(1/2+it)|^{\frac{178}{13}}dt\ll T^{\frac{29}{13}+\epsilon}.
			\end{align}
			Let $A,B$ be two integers such that $\frac{1}{A}+\frac{1}{A}+\frac{1}{B}=1.$ Then by H\"older's inequality  on the integration over $y_l$'s, gives the upper bound
			\begin{align*}
				&\ll\int_{|u_l|\leq T_l, \: l\in D'}\prod_{l=1}^{d'}K(u_l)\left(\int_{\R}|\zeta(1/2+iy_l+iu_l)|^A\Phi(y_l/T_l)dy_l\right)^{1/A}\times\\
				&\left(\int_{\R}|\zeta(1/2-iy_l+iu_l)|^A\Phi(y_l/T_l)dy_l\right)^{1/A}\times|P(0)|^{2(1-1/B)}\left(\int_{\R^{d'}}|P(\boldy)|^2\Phi\left(\boldy_{\bold{T}}\right)d\boldy\right)^{1/B}d\boldu\\
				&\ll\prod_{l=1}^{d'}\left( \int_{0}^{T_l}|\zeta(1/2+it)|^{A}dt\right)^{2/A} \times N^{8/A}T^{1/B}E^*(\boldX_N^{D'})^{1/B},
			\end{align*} 
			where the last inequality uses the estimates $\int_{|u_l|\leq T_l}K(u_l)du_l\ll 1$ and Lemma~\ref{lemma P and additive}.Taking $A=178/13$ and employing  \eqref{zeta moment}  we get the contribution to the upper bound of $I_{1,\dots,1}$ 
			\begin{align*}
				\ll T^{105/89+\epsilon} N^{52/89}E(\boldX_N^{D'})^{76/89}.
			\end{align*}
			Now, the remaining cases are when $\widetilde{D}$ is any proper non-trivial subset of $D'$. The associate contribution to the upper bound of $I_{1,\ldots,1}$ is given by,
			\begin{align*}
				\ll&\int_{|u_l|\leq T_l,\: l\in \widetilde{D}}\prod_{l\in\widetilde{D}} |K(u_l)|\int_{\R^{d'}}P(\boldy)|^2\Phi\left(\boldy_{\bold{T}}\right)\times\\
				&\prod_{l\in D'\setminus \widetilde{D}}\bigg(\frac{|y_l|^{1/2}}{T_l}+ \frac{1}{T_l^{1/2}}\bigg)\times
				\prod_{l\in\widetilde{D}}|\zeta(1/2+iy_l+iu_l)\zeta(1/2-iy_l+iu_l)|d\boldy \prod_{l\in\widetilde{D}}du_l,
			\end{align*}	
			where we use the estimate like the case $\widetilde{D} = \phi $ and interchanges of integrals. By using Holder's inequality on the integration over $\boldy$ (similar to the case $\widetilde{D} = D' $), the above estimate 
			\begin{align*}	
				\ll &\int_{\substack{|u_l|\leq T_l,\\ l\in \widetilde{D}}}\prod_{l=1}^{d'}K(u_l)\left(\int_{\R^{d'}}\prod_{l\in\widetilde{D}}|\zeta(1/2+iy_l+iu_l)|^A\prod_{l\in D'\setminus \widetilde{D}}\bigg(\frac{|y_l|^{1/2}}{T_l}+ \frac{1}{T_l^{1/2}}\bigg)^{A/2}\Phi\Big(\frac{y_l}{T_l}\Big)dy_l\right)^{1/A}\times\\
				&\left(\int_{\R^{d'}}\prod_{l\in\widetilde{D}}|\zeta(1/2-iy_l+iu_l)|^A\prod_{l\in D'\setminus \widetilde{D}}\bigg(\frac{|y_l|^{1/2}}{T_l}+ \frac{1}{T_l^{1/2}}\bigg)^{A/2}\Phi\Big(\frac{y_l}{T_l}\Big)dy_l\right)^{1/A}\times\\
				&|P(0)|^{2(1-1/B)}\left(\int_{\R^{d'}}|P(\boldy)|^2\Phi\left(\boldy_{\bold{T}}\right)d\boldy\right)^{1/B}\prod_{l\in\widetilde{D}}du_l.
			\end{align*}
			Note that $\int_{|u_l|\leq T_l}K(u_l)du_l\ll 1$  for any $l\in D'$ and also we estimate
			\begin{align*}
				\int_{\R}\left(|y_l|^{1/2}/T_l+1/T_l^{1/2}\right)^{A/2}\Phi(y_l/T_l)dy_l\ll T_l^{1-A/4}\ll 1 \mbox{ if } A\geq 4.
			\end{align*}
			Employing these notes, Lemma~\ref{lemma P and additive} as well as choosing $A=178/13$ and hence using \eqref{zeta moment}, we reduce the above estimate to the following upper bound
			\begin{align*}
				\prod_{l\in D'\setminus 	\widetilde{D}}T_l^{2/A-1/2}\prod_{l\in\widetilde{D}}T_l^{2\theta/A+\epsilon}N^{8/A}T^{1/B}E(\boldX_N^{D'})^{1/B}\ll T^{105/89+\epsilon} N^{52/89}E(\boldX_N^{D'})^{76/89}.
			\end{align*}
			Combining the estimates for any subset $\widetilde{D}$ of $D'$,	we conclude that 
			\begin{align*}
				I_{1\dots1}\ll N^4T^{1/2}+ T^{105/89+\epsilon} N^{52/89}E(\boldX_N^{D'})^{76/89}.
			\end{align*}
			
			\subsubsection*{Subcase 2:} Let $\eta_1,\ldots,\eta_{d'}\in\{2,3\}$ .
			Using ~\eqref{P bound}, ~\eqref{K bound} and the bound $|\zeta(1\pm it)|\ll\log |t|$, 
			\begin{align*}
				I_{\eta_1,\dots,\eta_{d'}}\ll \int_{|y_l|>1,\: l\in D'}N^4N^{d'/d+\delta d'/d}\prod_{l=1}^{d'}\frac{\log y_l}{(y_l+1)^2}d\boldy
				\ll N^{4+d'/d+\delta d'/d}.
			\end{align*}
			
			\subsubsection*{Subcase 3:}There exist some $l,k\in D'$ such that $\eta_l=1$ and $ \eta_k=2$ or $3$. Let $D''$ be the subset of $D'$ such that $l\in D''$ imply $\eta_l=1$. Set $d''=|D''|\geq 1$, then $d'-d''= |D'\setminus D'' |\geq 1$ and hence $l \in D'\setminus D''$ means  $\eta_l \in \{2,3\}$. In this case the upper bound of $I_{\eta_1,\dots,\eta_{d'}}$ is 
			\begin{align*}
				\int_{|y_l|>1,\:l\leq d'}|P(\boldy)|^2\Phi\left(\boldy_{\bold{T}}\right)&\int_{\R^{d''}}\prod_{l\in D''}|\zeta(1/2+iy_l+iu_l)\zeta(1/2-iy_l+iu_l)K(u_l)|d\boldu \\
				&\times\prod_{l\in D'\setminus D''}|2\pi\zeta(1\pm2iy_l)K(\pm y_l-i/2)|d\boldy.
			\end{align*}
			Now the estimates ~\eqref{K bound} and $|\zeta(1\pm it)|\ll\log |t|$ gives
			\begin{align*}
				I_{\eta_1,\dots,\eta_{d'}} \ll N^{(1/d+\delta/d)(d'-d'')}\int_{|y_l|>1,\: l\in D'}|P(\boldy)|^2\Phi\left(\boldy_{\bold{T}}\right)\prod_{l\in D'\setminus D''}\frac{\log |y_l|}{y_l^2+1}N_{l\in D'' }(y_l, T_l) d\boldy,
			\end{align*}
			where $N_{l\in D'' }(y_l, T_l)$ is defined by
			\begin{align*}
				N_{l\in D'' }(y_l, T_l):= \int_{\R^{d''}}\prod_{l\in D''}|\zeta(1/2+iy_l+iu_l)\zeta(1/2-iy_l+iu_l)K(u_l)|d\boldu.
			\end{align*}
			Similar to $N_{d'}(\boldy, \bold{T})$  in \eqref{N(y,T)}, we break the integrals of $N_{l\in D'' }(y_l, T_l)$ as 
			\begin{align*}
				\sum_{\widetilde{D}'\subseteq D''}\int_{\substack{|u_l|\leq T_l,\: l\in\widetilde{D}'\\|u_l|> T_l,\: l\in D''\setminus\widetilde{D}'}} .
			\end{align*}
			The remaining computation of $I_{\eta_1,\dots,\eta_{d'}}$ in this case is very similar to the Subcase 1, where we computed  $I_{1,\dots, 1}$. Let $J_\phi,  J_{D''}$ and $J_{\widetilde{D}'}$ be the contributions to $I_{\eta_1,\dots,\eta_{d'}}$ according to the choices $\widetilde{D}'= \phi$, $\widetilde{D}'= D''$ and $\widetilde{D}'\neq \phi$ and $D''$, respectively. Then we get
			\begin{align*}
				J_\phi \ll N^{(1/d+\delta/d)(d'-d'')} N^4\prod_{l\in D''}T_l^{1/2}.
			\end{align*}
			Following Subcase 1, as an application of Holder's inequality with $\frac{1}{A}+\frac{1}{A}+\frac{1}{B}=1$, we obtain
			\begin{align*}
				J_{D''}, J_{\widetilde{D}'} \ll N^{(1/d+\delta/d)(d'-d'')} N^{8/A}T^{1/B}E(\boldX_N^{D'})^{1/B}\prod_{l\in D''}T_l^{2\theta/A+\epsilon},
			\end{align*}
			where  $A=178/13$ and $\theta=29/13$.
			Combining the above estimates we have 
			\begin{align*}
				I_{\eta_1,\dots,\eta_{d'}} \ll N^{4+ (1/d+\delta/d)(d'-d'')}\prod_{l\in D''}T_l^{1/2} + N^{52/89+d'/d-d''/d+\delta(d'-d'')/d} T^{76/89}E(\boldX_N^{D'})^{76/89}\prod_{l\in D''}T_l^{29/89+\epsilon}.
			\end{align*}
			Choose $\delta=\epsilon/4$ and we note that $2^{u_l}\ll N^{1/d}$. This gives $T_l=2^{u_l}N^{1/d+\epsilon/2d}\ll N^{2/d+\epsilon/2d}$ , for all $l\in D'$. Hence we have 
			\begin{align}\label{T}
				T\ll N^{2d'/d+d'\epsilon/2d} \mbox{ and } \prod_{l\in D''}T_l\ll N^{2d''/d+d''\epsilon/2d}.
			\end{align} 
			
			Combining the estimates from Subcese 1, 2 and 3, with $\delta=\epsilon/4$, and using \eqref{T}, we get
			\begin{align*}
				I_\phi \ll&N^{4+d'/d+\epsilon d'/4d} + 
				\max_{1\leq 	d''<d'}\Big\{ N^{-d''/d+241d'/89d+58d''/89d+5\epsilon}\Big\}N^{52/89} E(\boldX_N^{D'})^{76/89}.
			\end{align*}
			Since the coefficient of $d''/d$ in the exponent of $N$ is negative, the reasonable choice is $d''=1$. Thus,
			\begin{align}\label{bound of I_3}
				I_\phi\ll N^{4+d'/d+\epsilon 	d'/4d}+N^{52/89+241d'/89d-31/89d+5\epsilon}E(\boldX_N^{D'})^{76/89}.
			\end{align}
			
			The remaining case in \eqref{partition of I}, when $\mathcal{D}$ is a non trivial proper subset of $D'$. Assume that $|\mathcal{D}|=r$. Then by using ~\eqref{G bound}, it is easy to see that 
			\begin{align*}
				I_{\mathcal{D}} \ll N^{(1/d+\delta/d)r}\int_{\substack{|y_l|\leq1\:: \:l\in \mathcal{D}\\ |y_l|>1\::\:l\in D'\setminus \mathcal{D}}}|P(\boldy)|^2\Phi\left(\boldy_{\bold{T}}\right)\prod_{l\in D'\setminus \mathcal{D}}G(y_l)d\boldy.
			\end{align*}
			Then we see that the integral in the right hand side can be obtain in a similar way as the estimate of $I_{\phi}$ in \eqref{bound of I_3}. In particular, we get such bound with the replacement $d'$ by $d'-r$ in \eqref{bound of I_3}.
			Essentially the bound for $I_{\mathcal{D}}$, is given by 
			\begin{align*}
				I_{\mathcal{D}}  \ll N^{(1/d+\epsilon/4d)r }\Big(N^{4+(d'-r)/d+\epsilon(d'-r)/4d}+N^{52/89+241(d'-r)/89d-31/89d+5\epsilon}E(\boldX_N^{D'})^{76/89}\Big) \ll I_\phi.
			\end{align*}
			Above estimate and \eqref{bound of I_3} combining with \eqref{partition of I} we obtain 
			\begin{align}\label{Bound of I in N}
				I\ll N^{4+d'/d+\epsilon d'/4d}+N^{52/89+241d'/89d-31/89d+5\epsilon}E(\boldX_N^{D'})^{76/89}.
			\end{align}
			Therefore, \eqref{Bound of I in N} and the bound $2^{u_1}\cdots2^{u_{d'}}/T=1/N^{d'/d+d'\epsilon/2d}$ prove Lemma~\ref{lemma From K to additive}.
		\end{proof}
		
		For $d\geq 3,$ combining the bounds for ~\eqref{mainineq} from
		~\eqref{case1} in Case 1 and ~\eqref{case 2 bound} in Case 2, we get 
		\begin{align*}
			\sum_{\substack{\boldj,\boldt\in\N^{d'}\\2^{u_l-1}\leq j_l,t_l<2^{u_l}}}\sum_{1\leq m,n\leq M}\mathds{1}_{\left\|\boldj\boldz_m^{D'}-\boldt\boldz_n^{D'}\right\|_\infty <1}\ll 	N^{3+2/d+2\epsilon}+N^{4-\epsilon d'/4d}+N^{\frac{152d'-31}{89d}+\frac{52}{89}+5\epsilon}E(\boldX_N^{D'})^{76/89}.
		\end{align*}
		
		Replacing above estimate in ~\eqref{variance bound}  we deduce that
		\begin{align*}
			\text{Var}(H_N,\boldmu)&\ll(\log N)^{2d}\left(N^{-\epsilon d'/4d}+\max_{D'\subseteq D}\Big\{N^{\frac{152d'-31}{89d}+\frac{52}{89}-4+5\epsilon}E^*(\boldX_N^{D'})^{76/89} \Big\}\right) \ll N^{-\kappa},
		\end{align*}
		for some $\kappa>0,$ provided that the hypothesis on the joint additive energy \eqref{thm1 conditions} is true.
		
		To get an upper bound estimate of \eqref{mainineq} for d=2, breaking into two cases is not enough, we need to break in few more cases.  We already addressed the case $\min(z_m^{(l)},z_n^{(l)})< N^{1/d-\epsilon}$ for some $l=1,2$ in Case 1.
		The complement situation of this case can be break in four parts depending on the range of $\min(z_m^{(l)},z_n^{(l)})$.
		\begin{itemize}
			\item[(i)]$\min(z_m^{(l)},z_n^{(l)}) \in[N^{1/2-\epsilon},N^{1/2+2\epsilon})$ for $l=1,2$,
			\item[(ii)] $\min(z_m^{(1)},z_n^{(1)})\in[N^{1/2-\epsilon},N^{1/2+2\epsilon})$ and $\min(z_m^{(2)},z_n^{(2)})\geq N^{1/2+2\epsilon}$,
			\item[(iii)] $\min(z_m^{(2)},z_n^{(2)})\in[N^{1/2-\epsilon},N^{1/2+2\epsilon})$ and $ \min(z_m^{(1)},z_n^{(1)})\geq N^{1/2+2\epsilon}$,
			\item[(iv)] $  z_m^{(l)},z_n^{(l)}\geq N^{1/2+ 2\epsilon}$ for $l=1,2.$
		\end{itemize}
		Observe that $2^{u_l-1}\leq j_l,t_l<2^{u_l}$ and $|j_lz_m^{(l)}-j_lz_m^{(l)}|<1$ together imply $z_m^{(l)}/z_n^{(l)}\in[1/4,4].$
		So, $\min(z_m^{(l)},z_n^{(l)})\in[C,D]$ implies $z_m^{(l)},z_n^{(l)}\in[C/4,4D].$
		
		For the case (i), it is enough to consider $z_m^{(l)},z_n^{(l)}\in[N^{\beta_l}/4,8N^{\beta_l})$ as we can cover  $[N^{1/2-\epsilon},N^{1/2+2\epsilon})$ by $\ll\log N$ many intervals of the form $[N^\beta_l,2N^\beta_l)$, where $1/2-\epsilon\leq\beta_l\leq1/2+2\epsilon.$ Then we can make the setup similar to Case 2 but here we define $T_l=2^{u_l-1}N^{\beta_l},\:l=1,2$ and $b_{\boldk}$ as follows
		\[b_{\boldk}=\sum_{\substack{1\leq m\leq M\\z_m^{(l)}\in[N^{\beta_l}/4,8N^{\beta_l}),\:l=1,2}}\mathds{1}(\boldz_m\in[\boldk,\boldk+1)).\]
		By Cauchy-Schwarz inequality we get $|P(0)|^2 \ll E(\boldX_N)N^{\beta_1+\beta_2}$ , which can use in the place of trivial upper bound $|P(0)|^2 \ll N^4$. Then we follow the Case 2 to bound~\eqref{mainineq}.
		
		For case (ii), we consider $z_n^{(1)},z_m^{(1)}\in[N^{\beta}/4,8N^{\beta})$ and $z_m^{(2)},z_n^{(2)}>N^{1/2+2\epsilon}$ where $1/2-\epsilon\leq\beta\leq1/2+2\epsilon.$ In this case we define $T_1=2^{u_1-1}N^\beta$ and $T_2=2^{u_2-1}N^{1/2+2\epsilon}$ and $b_{\boldk}$ as follows
		\[b_{\boldk}=\sum_{\substack{1\leq m\leq M\\z_m^{(1)}\in[N^{\beta}/4,8N^{\beta})\\z_m^{(2)}>N^{1/2+2\epsilon}}}\mathds{1}(\boldz_m\in[\boldk,\boldk+1)).\] 
		Here we have to work with the trivial bound $|P(0)|^2\ll N^4.$ Then following Case 2, we obtain bound of~\eqref{mainineq}. The case (iii) is same with case (ii), only we have to interchange the role of the components.
		The case (iv) essentially follow the Case 2 with using the definition $T_l=2^{u_l}N^{1/2+2\epsilon}$ for $l=1, 2$.
		In all first three cases, we can choose $\delta<\epsilon$ and for case (iv) take $\delta=\epsilon/2$ in the definition of function $K$ in \eqref{definition of K}. 
		Finally, combining all the estimates in these cases for \eqref{mainineq} and then replacing in ~\eqref{variance bound}, under the hypothesis on joint additive energy in the theorem we get $\text{Var}(H_N,\boldmu) \ll N^{-\kappa}$, for some $\kappa>0$.
		
		This is the main part of the proof. The rest of the arguments follow a standard method of applying Chebyshev’s inequality and Borel-cantelli lemma (see the proof of Theorem 1 in \cite{CA2017additive} and also \cite{hinrichs2019multi,RZ1999}).
		
		\section{Proof of Theorem~\ref{thmoc} }	
		
		When $x_n^{(1)}=\cdots=x_n^{(d)} :=x_n$ (say), the $\text{Var}(H_N,\boldmu)$(see subsection~\ref{variance subsection}) is bounded above by
		\begin{align}\label{same component varinace bound}
			\frac{(\log N)^d}{N^4}\sum_{\substack{\|\boldu\|_\infty\leq U\\\boldu\in\N^d}}\sum_{\substack{\boldj,\boldt\in\N^d\\2^{\boldu-1}\leq\boldj,\boldt<2^{\boldu}}}\sum_{1\leq m,n\leq M}\mathds{1}_{\left\|\boldj z_m-\boldt z_n\right\|_\infty<1},
		\end{align}
		where the multi-set $\{z_1,\dots,z_M\}=\{|x_m-x_n|:1\leq n\neq m\leq N\}.$
		We have $d\geq 3$ and $\boldu \in \mathbb{N}^d$ be fixed in \eqref{same component varinace bound}. We divide the sum over $m,n$ into three cases depending on the size of $\min(z_m,z_n).$\\
		\textbf{Case 1:} $\min(z_m,z_n)<N^{1/d+\epsilon}$.\\ 
		This case is same as to the Case 1 of section~\ref{subsection bound of mainineq}.\\
		\textbf{Case 2:} $N^{1/d+\epsilon}\leq \min(z_m,z_n)< N^{1+\epsilon}$\\
		\textbf{Case 3:} $\min(z_m,z_n)\geq N^{1+\epsilon}$\\
		In Case 2 as well as in Case 3 we observe that for all $1\leq l,\nu\leq d$,
		\begin{align*}
			\left|\frac{j_l}{t_l}-\frac{z_n}{z_m}\right|<\frac{1}{z_mt_l} 
			\implies |j_lt_\nu-j_\nu t_l|<\frac{t_l+t_\nu}{z_m}\leq \frac{4(rN)^{1/d}}{N^{1/d+\epsilon}}.
		\end{align*}
		As a conclusion we have for sufficiently large $N$ and for all $1\leq l,\nu\leq d$, the equality ${j_l}/{t_l}={j_\nu}/{t_\nu}$.
		So in Case 2 and Case 3,  the estimate  for the sums  over  $m,n$  and $\boldj,\boldt$  in ~\eqref{same component varinace bound} is bounded above by
		\begin{align}\label{same component}
			&\sum_{\substack{\boldj,\boldt\in\N^d\\2^{\boldu-1}\leq\boldj,\boldt<2^{\boldu}\\j_1/t_1=\cdots = j_d/t_d}}\sum_{1\leq m,n\leq M}\mathds{1}_{\left\|\boldj z_m-\boldt z_n\right\|_\infty<1}.
		\end{align}
		Similar to the estimate of \eqref{mainineq} in the proof of Theorem \ref{thm1}	we have to bound~\eqref{same component} for Case 2 and Case 3 here.
		
		For Case 3, let $u=\max(u_1\dots,u_d)=u_\nu$ (say), set $T=2^{u-1}N^{1+\epsilon/2}$ and recall $b_k,a_h$ from \eqref{b} and \eqref{a} for dimension one. Then~\eqref{same component} is bounded above by
		\begin{align}\label{reduction}
			\sum_{2^{u-1}\leq j_\nu,t_\nu<2^{u}}\sum_{1\leq m,n\leq M}\mathds{1}_{\left|j_\nu z_m-t_\nu z_n\right|<1} \sum_{\substack{\boldj,\boldt\in\N^{d-1}\\2^{\boldu-1}\leq\boldj,\boldt<2^{\boldu}\\j_1/t_1=\cdots = j_d/t_d}}1.
		\end{align}
		The condition $j_1/t_1=\cdots = j_d/t_d$ gives  estimate for the sum over $\boldj,\boldt \in \N^{d-1}$ is $ \ll (N^{1/d})^{d-1}$  as $2^u \ll N^{1/d}$. The remaining sums have been estimated in Lemma 3, Lemma 4 and Lemma 5 of \cite{CA2021Real}, where they choose $2^u\ll N$. But here $2^u \ll N^{1/d}$ and thus we use the function $K$ and $G$ defined in \eqref{definition of K} and \eqref{G}, which depend on $d$, in their Lemma 4 and Lemma 5.
		Specifically, we  use the estimate from the first passage of page no. 499 in \cite{CA2021Real} to achieve the upper bound estimate for  \eqref{same component} as
		\begin{align*}
			&\ll N^{(1-1/d)}\frac{2^u}{T}\left(N^{4+1/d+\epsilon/4}+ T^{105/89+\epsilon}N^{52/89}E(X_N)^{76/89}\right)\\
			&\ll N^{4-\epsilon/4}+N^{157/89+16/89d+2\epsilon}E(X_N)^{76/89},
		\end{align*}	
		since $T=2^{u-1}N^{1+\epsilon/2}\ll N^{1+1/d+\epsilon/2}.$
		
		Now note that $2^{u-1}\leq j_\nu,t_\nu<2^{u}$ and $|j_\nu z_m-t_\nu z_n|<1$ together imply $z_m/z_n\in[1/4,4].$ Therefore, if $\min(z_m,z_n)\in[C,D)$ then $z_m,z_n\in[C/4,4D].$ 
		
		By the above observation, in Case 2, it is enough to consider $z_m,z_n \in [N^\beta/4, 8N^\beta)$ for some $\beta \in [1/d+\epsilon, 1+\epsilon]$. As we can cover the interval $[N^{1/d+\epsilon},N^{1+\epsilon})$ by $\ll\log N$ many sub-intervals of the form $[N^\beta,2N^\beta)$. Let $\beta$ be fixed and $T=2^{u-1}N^\beta.$  Then similar to \eqref{reduction}, the estimate for \eqref{same component} is given by
		\begin{align*}
			\ll (\log{N }) (N^{1/d})^{d-1} \sum_{2^{u-1}\leq j_\nu,t_\nu<2^{u}} \sum_{\substack{1\leq m,n\leq M\\ z_m,z_n\in[N^\beta/4, 8N^\beta]}}\mathds{1}_{\left|j_\nu z_m-t_\nu z_n\right|<1}.
		\end{align*}
		
		Then following case 2 in page no. 500 of \cite{CA2021Real} we get
		\begin{align*}
			\sum_{2^{u-1}\leq j_\nu,t_\nu<2^{u}} \sum_{\substack{1\leq m,n\leq M\\ z_m,z_n\in[N^\beta/4, 8N^\beta]}}\mathds{1}_{\left|j_\nu z_m-t_\nu z_n\right|<1}
			\ll\frac{2^u}{T}\left(E(X_N)N^{1/d+\beta+\epsilon/4}+E(X_N)N^{13\beta/89}T^{105/89+\epsilon}\right).
		\end{align*}
		Since $T=2^{u-1}N^\beta \ll N^{1/d+\beta}$, in Case 2, the upper bound estimate of \eqref{same component} is 
		\begin{align*}
			&\ll  \log{N } N^{1-1/d-\beta}\left(E(X_N)N^{1/d+\beta+\epsilon/4}+E(X_N)N^{118\beta/89+105/89d+(1+\beta)\epsilon}\right)\\
			&\ll E(X_N)N^{118/89+16/89d+3\epsilon}.
		\end{align*}		
		Combining the estimates in all three cases, the variance i.e., the upper bound of ~\eqref{same component varinace bound} is
		\begin{align*}
			\ll N^{-1+2/d+3\epsilon}+E(X_N)N^{-238/89+16/89d+4\epsilon}+N^{-\epsilon/8}+N^{-199/89+16/89d+2\epsilon}E(X_N)^{76/89}\ll N^{-\epsilon'},
		\end{align*}
		provided that the hypothesis on the additive energy  bound is true. The rest of the arguments follow a standard method of applying Chebyshev’s inequality and Borel-cantelli lemma (see the proof of Theorem 1 in \cite{CA2017additive} and also \cite{hinrichs2019multi,RZ1999}).
		\section*{Acknowledgement}
		The authors would like to express sincere appreciation to Professor Christoph Aistleitner for several insightful suggestions.
		Also, they  thank Professor Olivier Ramar\'e for some helpful suggestions.
		

\begin{thebibliography}{99}
			
			\bibitem{CA2021Real}
			C.~Aistleitner, D.~El-Baz, and M.~Munsch.
			\newblock A pair correlation problem, and counting lattice points with the zeta function.
			\newblock {\em Geom. Funct. Anal.} 31, 483–512 (2021).
			
			\bibitem{CA2018uniform}
			C.~Aistleitner, T.~Lachmann, F.~Pausinger.
			\newblock Pair correlations and equidistribution.
			\newblock {\em J. Number Theory} 182 (2018), 206--220.
			
			\bibitem{CA2017additive}
			C.~Aistleitner, G.~Larcher, M.~Lewko.
			\newblock Additive energy and the Hausdorff dimension of the exceptional set in metric pair correlation problems. With an appendix by Jean Bourgain.
			\newblock{\em Israel J. Math.} 222 (2017), no. 1, 463--485.
			
			
			\bibitem{BDM}	
			T.~Bera, M. K.~Das, A.~Mukhopadhyay,
			\newblock On higher dimensional Poissonian pair correlation,
			\newblock{\em J. Math. Anal. Appl.} 530 (2024), no. 1, Paper No. 127686.	
			
			
			\bibitem{Bloom2019GCDSA}
			T. F. Bloom, A. Walker,
			\newblock GCD sums and sum-product estimates,
			\newblock {\em Israel J. Math.} 235 (2019), 1--11.
			
			\bibitem{BT}
			R. de la Bret\`eche, G. Tenenbaum, 
			\newblock Sommes de G\'al et applications, 
			\newblock{\em{Proc. Lond. Math. Soc.}} 119 (2019), 104-134.
			
			
			\bibitem{DT}
			M.~Drmota, R.~F.~Tichy,
			\newblock Sequences, Discrepancies and Applications,
			\newblock Springer--Verlag Berlin Heidelberg 1997	
			
			
			\bibitem{Graham Kolesnik}
			S. W.~Graham, G.~Kolesnik.
			\newblock Van der Corput's Method of Exponential Sums. Cambridge University Press.
			
			\bibitem{hinrichs2019multi}
			A.~Hinrichs, L.~Kaltenb{\"o}ck, G.~Larcher, W.~Stockinger, M.~Ullrich, 
			\newblock On a multi-dimensional Poissonian pair correlation concept and uniform distribution,
			\newblock {\em Monatsh. Math.} 190 (2019), no. 2, 333--352.
			
			\bibitem{haltonnotPPC2021}
			R.~Hofer, L.~Kalten{\"o}ck,  
			\newblock Pair correlations of Halton and Niederreiter sequences are not Poissonian,
			\newblock {\em Monatsh. Math.} 194, 789–809 (2021).		
			
			
			\bibitem{I}
			A. Ivi{\'c},
			\newblock The Riemann Zeta-Function,
			\newblock  Dover Publications, Inc., Mineola, NY,
			2003. Theory and applications, Reprint of the 1985 original [Wiley, New York;
			MR0792089 (87d:11062)].	
			
			
			\bibitem{montgomery}
			H. L.~Montgomery. 
			\newblock Ten Lectures on the Interface between Analytic Number
			Theory and Harmonic Analysis, volume 84 of CBMS Regional Conference Series
			in Mathematics. Published for the Conference Board of the Mathematical Sciences, Washington, DC; by the American Mathematical Society, Providence,
			RI, (1994)
			
			\bibitem{Robert-Sargos}
			O.~Robert, P.~Sargos. 
			\newblock Three-dimensional exponential sums with monomials.
			\newblock{\em J. Reine Angew. Math.} vol. 2006, no. 591, 2006, pp. 1-20.
			
			\bibitem{rudnick1998pair}
			Z. Rudnick, P. Sarnak, 
			\newblock The pair correlation function of fractional parts of polynomials,
			\newblock {\em Comm. Math. Phys.} 194 (1998), no. 1, 61--70.
			
			\bibitem{RSZ2001}
			Z. Rudnick, P. Sarnak, A. Zaharescu,
			\newblock The distribution of spacings between the fractional parts of
			$n^2\alpha$,
			\newblock {\em Invent. Math.} 145 (2001), no. 1, 37--57.
			
			\bibitem{RT}
			Z. Rudnick, N. Technau,
			\newblock The metric theory of the pair correlation function for small non-integer powers,
			\newblock  {\em J. London Math. Soc.} 106 (2022), 2752--2772.	
			
			\bibitem{RZ1999}
			Z. Rudnick, A. Zaharescu,
			\newblock A metric result on the pair correlation of fractional parts of sequences,
			\newblock{\em{Acta Arith.}} 89 (1999), 283--293.	
			
			
			\bibitem{steinerberger2020poissonian}
			S. Steinerberger,
			\newblock Poissonian pair correlation in higher dimensions,
			\newblock {\em J. Number Theory} 208 (2020), 47--58.a		
			
			\bibitem{N Watt}
			N.~Watt. 
			\newblock Exponential sums and the Riemann zeta-function II.
			\newblock{\em J. London Math. Soc.} 39(1989), 385–404.
		\end{thebibliography}
	\end{document}